\renewcommand*{\eqref}[1]{%
  \hyperref[{#1}]{\textup{\tagform@{\ref*{#1}}}}%
}
\numberwithin{figure}{section}
\numberwithin{equation}{section}
\newtheorem{theorem}{Theorem}[section]
\newtheorem*{theorem*}{Theorem}
\newtheorem{proposition}[theorem]{Proposition}
\newtheorem{corollary}[theorem]{Corollary}
\newtheorem*{corollary*}{Corollary}
\newtheorem{lemma}[theorem]{Lemma}
\theoremstyle{definition}
\newtheorem{definition}[theorem]{Definition}
\newtheorem{remark}[theorem]{Remark}
\newtheorem*{claim*}{Claim}
\newcommand{\R}{\mathbb R}
\newcommand{\N}{\mathbb N}
\newcommand{\Z}{\mathbb Z}
\newcommand{\EE}{\mathbb E}
\newcommand{\leb}{\operatorname{Leb}}
\newcommand{\Ent}{\operatorname{Ent}}
\newcommand{\prob}{\mu}
\newcommand{\den}{f}
\newcommand{\Den}{F}
\newcommand{\state}{\mathbb X}
\newcommand{\sig}{\mathcal X}
\newcommand{\pos}{\pi}
\newcommand{\ins}{\lambda}
\newcommand{\TT}{T}
\newcommand*\diff{\mathop{}\!\mathrm{d}}
\newcommand{\Pheat}{\mathrm P}
\newcommand{\PheatE}{\mathrm H}
\newcommand{\derm}{\mathrm D}
\newcommand{\der}{D}
\newcommand{\kk}{k}
\newcommand{\ubd}{M}
\newcommand{\pstate}{\Omega}
\newcommand{\psig}{\mathcal F}
\newcommand{\pmeasure}{\mathbb P}
\newcommand{\XX}{X}
\newcommand{\YY}{Y}
\newcommand{\LL}{|\log\prob(0)|}
\title{The Poisson transport map}
\author{Pablo L\'{o}pez-Rivera}
\address{Laboratoire Jacques-Louis Lions (LJLL),  Universit\'{e} Paris Cit\'{e} \& Sorbonne Universit\'{e}, CNRS, Paris
F-75013, France}
\email{plopez@math.univ-paris-diderot.fr}
\author{Yair Shenfeld}
\address{Division of Applied Mathematics, Brown University, Providence, RI, USA}
\email{Yair\_Shenfeld@Brown.edu}
\begin{document}
\maketitle

\begin{abstract}
We construct a transport map from Poisson point processes onto ultra-log-concave measures over the natural numbers, and show that this map is a contraction.  Our approach overcomes the known obstacles to transferring functional inequalities using transport maps in discrete settings, and allows us to deduce a number of functional inequalities for ultra-log-concave measures. 
In particular, we provide the currently best known constant in modified logarithmic Sobolev inequalities for ultra-log-concave measures.
\end{abstract}

\section{Introduction}
\label{sec:intro}

\subsection{The Poisson transport map}
A classical way to establish functional inequalities for a given probability measure is to find a \emph{Lipschitz} transport map from a source measure, for which the inequality is known, onto the target measure of interest. For example, suppose we want to prove a logarithmic Sobolev inequality for a probability measure $\prob$ over $\R^d$. Suppose further that we can find an $L$-Lipschitz map $\XX:\R^n\to \R^d$, where $n\ge d$, which transports the standard Gaussian $\gamma_n$ on $\R^n$ to $\prob$. Since $\gamma_n$ satisfies a logarithmic Sobolev inequality with constant $2$, we can write, for $g:\R^d\to \R$,
\begin{align}
\label{eq:LSI_trnasport}
\Ent_{\prob}(g^2)=\Ent_{\gamma_n}(g^2\circ \XX)\le 2\int_{\R^n}|\nabla(g\circ \XX)|^2\diff \gamma_n\le 2L^2\int_{\R^n}|(\nabla g)\circ \XX)|^2\diff \gamma_n=2L^2\int_{\R^d}|\nabla g|^2\diff\prob.
\end{align}
Thus, $\prob$ satisfies a logarithmic Sobolev inequality with constant $2L^2$. If we wish to apply this method for discrete measures then we face a number of obstacles. Consider for instance the problem of constructing a Lipschitz transport map $\XX:\N\to\N$ between the Poisson measure (with intensity 1) $\pos_1$ on $\N$, and another probability measure $\prob$ on $\N$. The fact that the map $\XX$ cannot split the mass of $\pos_1$ at any position in $\N$ severely restricts the type of measures $\prob$ that can arise as the pushforward of $\pos_1$ under $\XX$. In addition, even if we can construct a Lipschitz transport map $\XX$ between $\pos_1$ and $\prob$, the lack of chain rule in the discrete setting  hinders the argument in \eqref{eq:LSI_trnasport}. 

In this work we show that these obstacles can be overcome by transporting the Poisson \emph{point processes} $\pmeasure$  onto probability measures $\prob$ on $\N$. In the notation above, $d=1$ and  $n= \infty$. In addition, we will show that in the setting considered in this work, the chain rule issue can be avoided. Let us describe informally our transport map. Fix a time $\TT>0$ and $\ubd>0$, and consider a Poisson point process over $[0,\TT]\times [0,\ubd]$: 
\begin{itemize}
\item The numbers of points that fall in disjoint regions of  $[0,\TT]\times [0,\ubd]$ are independent.
\item Given $B\subseteq [0,\TT]\times [0,\ubd]$, the number of points that fall into $B$ is distributed like a Poisson measure on $\N$ with intensity $\leb(B)$.
\end{itemize}
Now let $\ins:[0,\TT]\to [0,\ubd]$ be a regular curve, and define the counting process $(\XX_t^{\ins})_{t\in [0,\TT]}$ by letting 
\begin{equation}
\label{eq:X_t_def_intro}
X_t^{\ins}:=\text{number of points in $[0,t]\times [0,\ubd]$ that fall below the curve $\ins$},\quad \text{(Figure \ref{fig:X})}.
\end{equation}
Given a measure $\prob$ on $\N$, we can choose $\ins$ in a stochastic way so that $\XX_{\TT}^{\ins}\sim \prob$. We call $\XX_{\TT}^{\ins}$ the \textbf{Poisson transport map} as it transports the Poisson point process $\pmeasure$ onto $\prob$. 

The Poisson transport map can be viewed as the discrete analog of the \emph{Brownian transport map} of Mikulincer and the second author \cite{mikulincer2021brownian}, which transports the Wiener measure on path space onto  probability measures over $\R^d$. The Brownian transport map  is based on the F\"ollmer process \cite{follmer2005entropy, follmer2006time, lehec2013representation}, and, analogously, the Poisson transport map is based on the process $(\XX_t^{\ins})_{t\in [0,T]}$, which is the discrete analogue of the F\"ollmer process. The process $(\XX_t^{\ins})_{t\in [0,T]}$ was constructed by Klartag and Lehec \cite{Klartag_Lehec} (specializing and elaborating on earlier work of Budhiraja, Dupuis, and Maroulas \cite{MR2841073}), who  used it to prove functional inequalities. In Section \ref{subsec:BrownianVsPoisson}, we discuss the similarities and differences between the Brownian transport map and the Poisson transport map. 

\begin{figure}
\begin{tikzpicture}[scale=1]
\draw[->] (0,0) -- (5,0);
  \draw[->] (0,0) -- (0,5);
  \draw[dashed] (0,4.8) -- (5,4.8);
  \draw (1.4,0.2) -- (1.4,-0.2);
\draw [black] plot [smooth, tension=1] coordinates { (0,.5) (1,4) (3.5,2) (4.2,4.5) (5,3.8)};
\node at (-0.4,4.8) {$\ubd$};
\node at (5,-0.3) {$\TT$};
\node at (1.4,-0.4) {$2$};
\node at (6,3.8) {$(\ins_t)_{t\in [0,\TT]}$};
\draw[black,] (0.2,4) circle (.3ex);
\draw[black,fill=black] (1,1) circle (.3ex);
\draw[black,fill=black] (2,2) circle (.3ex);
\draw[black,] (2.8,3) circle (.3ex);
\draw[black,fill=black] (3.3,1) circle (.3ex);
\draw[black,] (3.8,3.3) circle (.3ex);
\draw[black,fill=black] (4.5,4) circle (.3ex);
\end{tikzpicture}
\caption{The points in $[0,\TT]\times [0,\ubd]$ are generated according to a standard Poisson process (7 points in this case). At time $t\in [0,\TT]$ the value of the process $\XX_t^{\ins}$ is equal to the number of points under the curve (filled circles). In the figure, $X_2^{\ins}=1$ and $\XX_{\TT}^{\ins}=4$.}
\label{fig:X}
\end{figure}
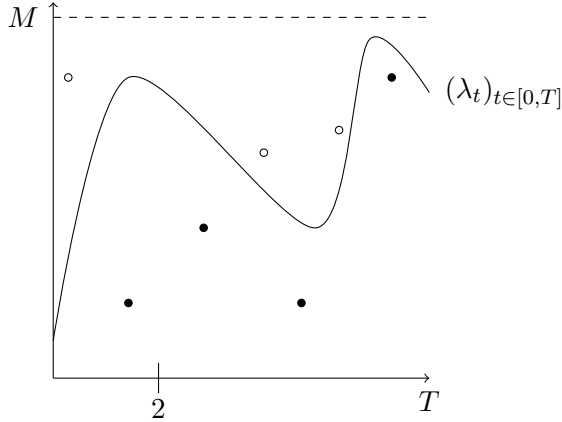

\subsection{Ultra-log-concave measures}

Just as in the continuous case, we cannot expect to have a Lipschitz transport  map (with good constants) from $\pmeasure$ onto any probability measure $\prob$ on $\N$, since the existence of such map will imply functional inequalities for $\prob$. The classical result on the existence of Lipschitz transport maps in the continuous setting is due to Caffarelli \cite{MR1800860}, who showed that if $n=d$, and $\prob=\den\gamma_d$ with $f:\R^d\to\R_{\ge 0}$ log-concave, then there exits a 1-Lipschitz transport map between $\gamma_d$ and $\prob$. Closer to our setting, it was shown in \cite{mikulincer2021brownian} that the Brownian transport map is  1-Lipschitz when the target measure over $\R^d$ is of the form $\prob=\den\gamma_d$, with $f$ log-concave.

In the discrete setting, the analogue of a measure $\prob$ being ``more log-concave than the Gaussian" is that the measure is \emph{ultra-log-concave}. To define this notion we recall that a positive function $\den:\N\to \R_{> 0}$ is \emph{log-concave} if
\begin{equation}
\label{eq:lc_def_ulc_intro}
\den^2(\kk)\ge \den(\kk-1)\den(\kk+1),\quad \forall ~\kk\in\{1,2,\ldots\}.
\end{equation}
\begin{definition}
\label{def:ULC_intro}
A probability measure $\prob$ on $\N$ is \emph{ultra-log-concave} if $\prob=\den\pos_{\ins}$, where $\pos_{\ins}$ is the Poisson measure with intensity $\ins$, and $\den:\N\to \R_{>0}$ is a positive  log-concave function\footnote{In Section \ref{sec:ulc} we recall equivalent definitions of ultra-log-concave measures.}.
\end{definition}
Ultra-log-concave measures  form an important class of discrete probability measures as it possesses desirable properties such as closure under convolution \cite[Theorem 4.1(b)]{MR3290441}. They are also ubiquitous and show up in fields outside of probability such as  combinatorics and convex geometry. We refer to the introduction of \cite{MR4420904} for more information. 

Our first main result is that the Poisson transport map from the Poisson point process $\pmeasure$ onto ultra-log-concave measures $\prob$ is 1-Lipschitz. We will formulate this condition in terms of the Malliavin derivative $\derm_{(t,z)}$ of $\XX_{\TT}$, which captures the effect of adding a point at $(t,z)$ to the point process on the value of $\XX_{\TT}$ (see Section \ref{subsec:poisson_space}).

\begin{theorem}
\label{thm:contraction_intro}
Fix a real number $\TT>0$, and let $\prob=\den\pos_{\TT}$ be an ultra-log-concave probability measure over $\N$. Let $\XX_{\TT}$ be the Poisson transport map from $\pmeasure$ to $\prob$. Then, $\pmeasure$-almost-surely,
\[
\derm_{(t,z)}\XX_{\TT}\in \{0,1\}\quad\forall~ (t,z)\in [0,\TT]\times [0,\ubd],\quad \left(\ubd=\frac{\den(1)}{\den(0)}\right).
\]
\end{theorem}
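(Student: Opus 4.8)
The plan is to combine the pathwise description of $\XX_\TT$ with a monotone coupling argument. First I would recall, from the Klartag--Lehec construction, that $(\XX_t)_{t\in[0,\TT]}$ is the counting process produced from the Poisson point process on $[0,\TT]\times[0,\ubd]$ via \eqref{eq:X_t_def_intro}, where the (stochastic, piecewise-constant) curve is $\ins_t=(\Pheat_{\TT-t}\den)(\XX_{t^-}+1)/(\Pheat_{\TT-t}\den)(\XX_{t^-})$; here $\XX_{t^-}$ is the value just before time $t$, and $\Pheat_s$ is the Poisson semigroup $(\Pheat_s\phi)(\kk)=\EE[\phi(\kk+N_s)]$ with $N_s\sim\pos_s$. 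I would then record two structural facts. First, $(i)$ at each atom of the point process lying below the curve, $\XX$ increases by exactly $1$, and it is unchanged at every other atom. Second, $(ii)$ for each fixed $t$ the map $\kk\mapsto(\Pheat_{\TT-t}\den)(\kk+1)/(\Pheat_{\TT-t}\den)(\kk)$ is \emph{non-increasing}, and is bounded above by $\ubd$. Fact $(ii)$ is where ultra-log-concavity enters: $\Pheat_{\TT-t}\den$ is log-concave because $\den$ is (the Poisson semigroup preserves log-concavity on $\N$; equivalently, the time-$t$ marginal of $\XX$ is a thinning of the ultra-log-concave $\prob$, hence again ultra-log-concave), and the upper bound is the inequality $(\Pheat_{\TT-t}\den)(1)/(\Pheat_{\TT-t}\den)(0)\le\den(1)/\den(0)=\ubd$, a short consequence of $\den(\kk+1)/\den(\kk)$ being non-increasing. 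In particular the curve indeed stays in $[0,\ubd]$, and $\ubd=\den(1)/\den(0)$ is precisely the height needed (the bound at $t=\TT$ is tight).

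Next I would unwind the Malliavin derivative as the add-one-point difference: for a configuration $\omega$ of the point process, $\derm_{(t,z)}\XX_\TT=\XX_\TT(\omega\cup\{(t,z)\})-\XX_\TT(\omega)$. Fixing $(t,z)$ --- assuming, as holds for Lebesgue-a.e.\ $(t,z)$, that $\omega$ has no atom with time-coordinate $t$, coincident times being absorbed by a tie-breaking convention or into the almost-sure statement --- I would run the two counting processes $\XX$ (from $\omega$) and $\XX'$ (from $\omega\cup\{(t,z)\}$) simultaneously. For $s<t$ the two configurations agree on $[0,s]\times[0,\ubd]$, so $\XX_s=\XX'_s$; write $a:=\XX_{t^-}=\XX'_{t^-}$. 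At time $t$: if $z>\ins_t$ then the added atom lies above the curve, so $\XX'_t=a=\XX_t$, and since the configurations agree on $(t,\TT]\times[0,\ubd]$ and the processes already coincide at time $t$, they coincide on $[t,\TT]$; hence $\derm_{(t,z)}\XX_\TT=0$. Otherwise $z\le\ins_t$, the added atom is counted, and $\XX'_t=a+1=\XX_t+1$.

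In this remaining case I would prove, by induction over the finitely many subsequent atoms (both processes now being driven by the \emph{same} future atoms), that the gap $\XX'_s-\XX_s$ stays in $\{0,1\}$ for $s\in[t,\TT]$. Suppose just before an atom at time $s\in(t,\TT]$ the gap is $\delta\in\{0,1\}$. If $\delta=0$ the two processes have the same count, hence the same curve height, so the atom is counted by both or by neither, and the gap stays $0$ (and then forever). If $\delta=1$, write $b:=\XX_{s^-}$, so $\XX'_{s^-}=b+1$; by $(ii)$ the curve height for $\XX'$, namely $(\Pheat_{\TT-s}\den)(b+2)/(\Pheat_{\TT-s}\den)(b+1)$, is at most that for $\XX$, namely $(\Pheat_{\TT-s}\den)(b+1)/(\Pheat_{\TT-s}\den)(b)$. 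Hence an atom counted by $\XX'$ is also counted by $\XX$ (both jump by $1$, gap stays $1$); an atom counted by $\XX$ alone closes the gap to $0$; an atom counted by neither leaves the gap at $1$. By $(i)$ no jump can overshoot, so in all cases the gap remains in $\{0,1\}$, and therefore $\derm_{(t,z)}\XX_\TT=\XX'_\TT-\XX_\TT\in\{0,1\}$. Together with the previous paragraph this gives the statement, $\pmeasure$-almost surely and for all $(t,z)$.

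I expect the main obstacle to be fact $(ii)$ --- the monotonicity in the state of the intensity driving the transport --- since this is exactly where ultra-log-concavity of $\prob$ is used; it is the discrete counterpart of the non-positivity of the Hessian of $\log\Pheat_{\TT-t}\den$ that makes the Brownian transport map $1$-Lipschitz for log-concave perturbations of the Gaussian in \cite{mikulincer2021brownian}. Establishing it reduces to the fact that the Poisson semigroup preserves log-concavity on $\N$ --- e.g.\ because a log-concave sequence extended by zeros is a P\'{o}lya frequency sequence of order $2$ and such sequences are closed under convolution, or via the thinning characterisation of ultra-log-concavity recalled in Section~\ref{sec:ulc} --- after which the bound by $\ubd$ is a one-line computation. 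Everything else is the bookkeeping of the monotone coupling; the only other point needing care is the routine measurability of the pathwise construction and of the add-one-point operation, and disposing of the measure-zero set of space--time locations colliding in time with an existing atom.
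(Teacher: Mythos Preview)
Your proposal is correct and follows essentially the same route as the paper's proof of Theorem~\ref{thm:contraction}: both arguments run the two counting processes $\XX(\omega)$ and $\XX(\omega+\delta_{(t,z)})$ in parallel, observe they agree up to time $t$, split on whether the added atom lies above or below the (common) curve at time $t$, and then, in the interesting case, induct over the finitely many subsequent atoms using the log-concavity of $\Pheat_{\TT-t}\den$ (your fact~(ii), the paper's Proposition~\ref{prop:poisson_presev}) to show the gap can only stay at $1$ or drop to $0$. Your three sub-cases in the gap-$1$ step are exactly the paper's Cases~3.1--3.3, and your observation that gap $0$ is absorbing is the same as the paper's final paragraph.
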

The fact that $\derm_{(t,z)}\XX_{\TT}$ is integer-valued follows from the definition of the Malliavin derivative $\derm_{(t,z)}$, and since $\XX_{\TT}$ is integer-valued. However, a priori, saying that $\XX_{\TT}$ is 1-Lipschitz could have implied $\derm_{(t,z)}\XX_{\TT}\in \{-1,0,1\}$. Theorem \ref{thm:contraction_intro} shows that $\derm_{(t,z)}\XX_{\TT}\ge 0$, which will be important to tackle the chain rule issue when transporting functional inequalities from $\pmeasure$ to $\prob$.

\subsection{Functional inequalities for ultra-log-concave measures} The absence of the chain rule in the discrete setting  complicates the study of functional inequalities for  measures on $\N$.  For example, Poisson measures $\pos_{\ins}$ over $\N$, the discrete analogues of Gaussians, do not satisfy logarithmic Sobolev inequalities. Rather, they satisfy \emph{modified} logarithmic Sobolev inequalities as was first developed by Bobkov and Ledoux \cite{MR1636948}. In the discrete setting there are various choices for modified logarithmic Sobolev inequalities (see \cite[Proposition 3.6]{bobkov2006modified} for the relation between them), and in the context of the Poisson measure \emph{Wu's inequality} is  the strongest. For example, to recover the Gaussian  logarithmic Sobolev inequality via the combination of the Poisson  modified logarithmic Sobolev inequality and the central limit theorem one needs Wu's inequality \cite[Remark on page 75]{LSIessentials}. To introduce Wu's inequality let $\der $ be the discrete derivative of a function $g:\N\to \R$,
\[
\der g(\kk):=g(\kk+1)-g(\kk)\quad\text{for} \quad \kk\in\N.
\]
\begin{theorem}
\label{thm:Wu_intro}{
 \cite[Theorem 1.1]{MR1800540}.}
Let $\pos_{\TT}$ be the Poisson measure over $\N$ with intensity $\TT$. Then, for any positive $g\in L^2(\N,\pos_{\TT})$,
\begin{equation}
\label{eq:Wu_intro}
\Ent_{\pos_{\TT}}(g)\le \TT\,\EE_{\pos_{\TT}}[\Psi(g,\der g)],
\end{equation}
where $\Psi(u,v):=(u+v)\log(u+v)-u\log u-(\log u+1)v$.
\end{theorem}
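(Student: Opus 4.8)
The plan is to derive \eqref{eq:Wu_intro} from a sharp two-point modified logarithmic Sobolev inequality by tensorization, and then to recover the Poisson case as a limit of binomials. Throughout write $\phi(x):=x\log x$, so that $\phi'(x)=\log x+1$ and $\Psi(u,v)=\phi(u+v)-\phi(u)-\phi'(u)v$, whence $\Psi\ge 0$ by convexity of $\phi$. The first step is the two-point bound: for $p\in[0,1]$ and $a,b>0$, if $\nu_p$ denotes the measure on $\{0,1\}$ with $\nu_p(\{1\})=p$ and $g$ is the function $g(0)=a,\ g(1)=b$, then
\begin{equation}
\label{eq:twopoint}
\Ent_{\nu_p}(g)\le p\,\Psi\bigl(g(0),\der g(0)\bigr).
\end{equation}
Writing $m:=(1-p)a+pb=\EE_{\nu_p}[g]$ and using $m-a=p(b-a)$, inequality \eqref{eq:twopoint} is equivalent to $\phi(m)\ge\phi(a)+\phi'(a)(m-a)$, which is precisely the subgradient inequality for the convex function $\phi$ at the point $a$. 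Hence \eqref{eq:twopoint} holds, crucially with the optimal prefactor $p$.

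Next I would tensorize. Let $\mu:=\nu_p^{\otimes n}$ on $\{0,1\}^n$ and $S_n(x):=x_1+\cdots+x_n$. For a positive function $h$ on $\N$, apply the subadditivity of entropy to $g:=h\circ S_n$:
\[
\Ent_{\mu}(g)\le\sum_{i=1}^n\EE_{\mu}\bigl[\Ent^{(i)}_{\nu_p}(g)\bigr],
\]
where $\Ent^{(i)}_{\nu_p}$ is the entropy in the variable $x_i$ with the remaining coordinates frozen. Fixing the coordinates $j\ne i$ and setting $s_i:=\sum_{j\ne i}x_j$, the function $g$ takes only the two values $h(s_i)$ and $h(s_i+1)$ as $x_i$ ranges over $\{0,1\}$, so \eqref{eq:twopoint} gives $\Ent^{(i)}_{\nu_p}(g)\le p\,\Psi\bigl(h(s_i),\der h(s_i)\bigr)$. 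Since $s_i=S_n-x_i$, since by exchangeability each of the $n$ summands has the same expectation, and since $S_n-x_1=\sum_{j\ge 2}x_j\sim\mathrm{Bin}(n-1,p)$ while $S_n\sim\mathrm{Bin}(n,p)$, one arrives at
\begin{equation}
\label{eq:binmlsi}
\Ent_{\mathrm{Bin}(n,p)}(h)\le np\,\EE_{\mathrm{Bin}(n-1,p)}\bigl[\Psi(h,\der h)\bigr]
\end{equation}
(the case $n=1$ just recovers \eqref{eq:twopoint}).

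Finally I would pass to the limit. Taking $p=\TT/n$ with $n>\TT$ keeps the prefactor $np=\TT$ fixed, while both $\mathrm{Bin}(n,\TT/n)$ and $\mathrm{Bin}(n-1,\TT/n)$ converge to $\pos_{\TT}$; moreover $\mathrm{Bin}(n,\TT/n)(\{k\})\le \TT^k/k!$ and $\mathrm{Bin}(n-1,\TT/n)(\{k\})\le \TT^k/k!$ uniformly in $n$, a summable majorant, so any bounded function of $k$ integrates in the limit to its $\pos_{\TT}$-average by dominated convergence. Thus \eqref{eq:binmlsi} yields \eqref{eq:Wu_intro} first for $h$ that is bounded and bounded away from $0$ — for which $h$, $\phi(h)$, and $\Psi(h,\der h)$ are all bounded — and then for an arbitrary positive $g\in L^2(\N,\pos_{\TT})$ via the truncation $h_m:=\min(\max(g,1/m),m)$ together with dominated/monotone convergence on each side (if the right-hand side of \eqref{eq:Wu_intro} is infinite there is nothing to prove). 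The one genuinely delicate step is this passage to the limit, where mass or entropy could a priori escape to infinity; the uniform bound $\TT^k/k!$ rules that out, and, because the prefactor $np=\TT$ is preserved exactly, the argument produces Wu's sharp constant rather than a lossy one.

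An alternative route, closer to Wu's original proof, avoids the limit altogether: realize $\pos_{\TT}$ as the reversible measure of an $M/M/\infty$ queue, integrate the entropy dissipation of the associated semigroup over $t\in(0,\infty)$, and combine the intertwining relation between its generator and the discrete derivative with a pointwise convexity estimate; this is more computational, and I would prefer the tensorization argument above, which is short and already optimal.
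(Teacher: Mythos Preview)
The paper does not give its own proof of Theorem~\ref{thm:Wu_intro}: it is quoted from Wu's paper \cite{MR1800540} as background, and no argument is supplied. The paper's machinery does recover Wu's inequality as the special case $\prob=\pos_{\TT}$ of Corollary~\ref{cor:modified_LSI_ULC} (since then $\LL=\TT$), but that route rests on Chafa\"{\i}'s $\Phi$-Sobolev inequality for the Poisson point process (Theorem~\ref{thm:PhiSobolePoisson}), which is itself a generalization of Wu's result and is again only cited. So there is nothing in the paper to compare your argument against at the level of an independent proof.

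Your proposal is a correct and standard self-contained proof. The two-point bound \eqref{eq:twopoint} is exactly the tangent-line inequality for $\phi(x)=x\log x$, as you say; tensorization via subadditivity of entropy then yields \eqref{eq:binmlsi} with the sharp prefactor $np$, and the Poisson limit $p=\TT/n$ with the uniform majorant $\TT^k/k!$ handles the passage to $\pos_{\TT}$ for bounded $h$. The only place to be a little more careful is the final truncation $h_m=\min(\max(g,1/m),m)$: on the right-hand side one should argue that $\Psi(h_m,\der h_m)\to\Psi(g,\der g)$ pointwise and then invoke Fatou's lemma (rather than dominated convergence, since truncation does not obviously produce a monotone or dominated sequence for $\Psi$), which already gives $\Ent_{\pos_{\TT}}(g)\le \TT\,\EE_{\pos_{\TT}}[\Psi(g,\der g)]$ once the left-hand side is handled by the usual lower semicontinuity of entropy. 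Your alternative via the $M/M/\infty$ semigroup is indeed Wu's original argument; the tensorization route you chose is shorter and, as you note, already optimal.
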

In the continuous setting, as a consequence  of the existence of 1-Lipschitz transport maps, measures which are more log-concave than Gaussians satisfy logarithmic Sobolev inequalities. Thus, in the discrete setting we can expect ultra-log-concave measures to satisfy modified logarithmic Sobolev inequalities. Indeed, such inequalities for ultra-log-concave measures were obtained by Caputo, Dai Pra, and Posta \cite[Theorem 3.1]{caputo2009convex}, but the stronger Wu-type modified logarithmic Sobolev inequality for ultra-log-concave measures was only obtained later by Johnson in \cite{MR3729642}. 

\begin{theorem}{\cite[Theorem 1.3 and Lemma 5.1]{MR3729642}.}
\label{thm:Johnson_into}
Let $\prob$ be an ultra-log-concave probability measure over $\N$. Then, for any positive $g\in L^2(\N,\prob)$,
\begin{equation}
\label{eq:Johnson _intro}
\Ent_{\prob}(g)\le \frac{\prob(1)}{\prob(0)}\,\EE_{\prob}[\Psi(g,\der g)],
\end{equation}
where $\Psi(u,v):=(u+v)\log(u+v)-u\log u-(\log u+1)v$.
\end{theorem}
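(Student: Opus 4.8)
The plan is to push Wu's inequality forward from the Poisson point process $\pmeasure$ onto $\prob$ along the Poisson transport map $\XX_{\TT}$, with Theorem~\ref{thm:contraction_intro} supplying the substitute for the missing chain rule. Write $\prob=\den\pos_{\TT}$ with $\den$ bounded and log-concave (possible since $\prob$ is ultra-log-concave), put $\ubd=\den(1)/\den(0)$, and let $\XX_{\TT}$ be the Poisson transport map of Theorem~\ref{thm:contraction_intro} associated with the box $[0,\TT]\times[0,\ubd]$. First I would record the version of Wu's inequality actually needed, namely the one on Poisson space proved in \cite{MR1800540}, of which Theorem~\ref{thm:Wu_intro} is the special case of functionals depending on the configuration $\eta$ only through its total number of points: for every positive functional $G$ on the configuration space of $\pmeasure$,
\begin{equation}
\label{eq:Wu_PPP_plan}
\Ent_{\pmeasure}(G)\le \EE_{\pmeasure}\!\left[\int_{[0,\TT]\times[0,\ubd]}\Psi\big(G,\derm_{(t,z)}G\big)\,\diff t\,\diff z\right],
\end{equation}
where $\derm_{(t,z)}G=G(\eta+\delta_{(t,z)})-G(\eta)$ and $\Psi$ is as in Theorem~\ref{thm:Wu_intro}. (If one insists on invoking only Theorem~\ref{thm:Wu_intro}, then \eqref{eq:Wu_PPP_plan} can be recovered from it by tensorizing the modified logarithmic Sobolev inequality over a fine partition of the box and letting the mesh tend to zero.) Applying \eqref{eq:Wu_PPP_plan} to $G=g\circ\XX_{\TT}$ and using that $\XX_{\TT}$ pushes $\pmeasure$ forward to $\prob$, the left-hand side is exactly $\Ent_{\prob}(g)$.

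The heart of the matter is the right-hand side, and this is where Theorem~\ref{thm:contraction_intro} is used. Writing $a=\XX_{\TT}(\eta)$ and $\derm_{(t,z)}\XX_{\TT}=\XX_{\TT}(\eta+\delta_{(t,z)})-a\in\{0,1\}$, I would check the exact discrete chain rule
\[
\derm_{(t,z)}(g\circ\XX_{\TT})=g\big(a+\derm_{(t,z)}\XX_{\TT}\big)-g(a)=\der g(a)\,\derm_{(t,z)}\XX_{\TT},
\]
which holds because both sides vanish when $\derm_{(t,z)}\XX_{\TT}=0$ and both equal $\der g(a)$ when $\derm_{(t,z)}\XX_{\TT}=1$. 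This is precisely the step that would fail if $\derm_{(t,z)}\XX_{\TT}$ were allowed to equal $-1$: then the left-hand side would be $-\der g(a-1)$, not $-\der g(a)$. Feeding this identity into $\Psi$ and using $\Psi(u,0)=0$ together with $\derm_{(t,z)}\XX_{\TT}\in\{0,1\}$, one gets the pointwise identity
\[
\Psi\big(g(\XX_{\TT}),\,\derm_{(t,z)}(g\circ\XX_{\TT})\big)=\derm_{(t,z)}\XX_{\TT}\cdot\Psi\big(g(\XX_{\TT}),\der g(\XX_{\TT})\big).
\]

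To conclude, I would integrate in $(t,z)$ and take $\EE_{\pmeasure}$. The factor $\Psi(g(\XX_{\TT}),\der g(\XX_{\TT}))$ does not depend on $(t,z)$, and it is nonnegative since $\Psi$ is the Bregman divergence of the convex function $x\mapsto x\log x$ evaluated at the points $g(\XX_{\TT})>0$ and $g(\XX_{\TT})+\der g(\XX_{\TT})=g(\XX_{\TT}+1)>0$. Hence, bounding $\derm_{(t,z)}\XX_{\TT}\le 1$ over the box $[0,\TT]\times[0,\ubd]$, whose Lebesgue measure is $\TT\ubd$,
\[
\EE_{\pmeasure}\!\left[\int_{[0,\TT]\times[0,\ubd]}\Psi\big(g\circ\XX_{\TT},\derm_{(t,z)}(g\circ\XX_{\TT})\big)\,\diff t\,\diff z\right]\le \TT\ubd\,\EE_{\pmeasure}\big[\Psi(g(\XX_{\TT}),\der g(\XX_{\TT}))\big]=\TT\ubd\,\EE_{\prob}[\Psi(g,\der g)],
\]
again by the pushforward property. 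Finally $\TT\ubd=\TT\den(1)/\den(0)=\prob(1)/\prob(0)$, because $\pos_{\TT}(1)/\pos_{\TT}(0)=\TT$, so combining the two displays gives $\Ent_{\prob}(g)\le \frac{\prob(1)}{\prob(0)}\EE_{\prob}[\Psi(g,\der g)]$, which is \eqref{eq:Johnson _intro}.

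The genuine content sits entirely in the input Theorem~\ref{thm:contraction_intro}: knowing only that $\XX_{\TT}$ is $1$-Lipschitz would not be enough, because it is the sign constraint $\derm_{(t,z)}\XX_{\TT}\ge 0$ — monotonicity of $\XX_{\TT}$ under adding a point — that converts the discrete chain rule into an exact identity instead of an inequality in the unhelpful direction. The only routine issue to attend to is integrability: one should first prove the inequality for $g$ bounded and bounded below by a positive constant, so that \eqref{eq:Wu_PPP_plan} and all the algebra are manifestly justified, and then pass to arbitrary positive $g\in L^2(\N,\prob)$ by truncation and monotone convergence, the estimate being vacuous when its right-hand side is infinite.
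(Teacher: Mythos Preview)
The paper does not prove Theorem~\ref{thm:Johnson_into} itself; it is quoted as Johnson's result from \cite{MR3729642}. What the paper proves is the sharper Theorem~\ref{thm:modified_LSI_ULC_into} (as Corollary~\ref{cor:modified_LSI_ULC} of Theorem~\ref{thm:PhiSoboleUlc}), from which \eqref{eq:Johnson _intro} follows via the inequality $\LL\le\prob(1)/\prob(0)$ of Corollary~\ref{cor:mean_mu}.

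Your argument is correct and is precisely the paper's proof of Theorem~\ref{thm:PhiSoboleUlc} specialized to $\Phi(r)=r\log r$, with one step coarsened. After reaching the pointwise identity $\Psi(g\circ\XX_{\TT},\derm_{(t,z)}(g\circ\XX_{\TT}))=1_{\{\derm_{(t,z)}\XX_{\TT}=1\}}\Psi(g\circ\XX_{\TT},\der g\circ\XX_{\TT})$ (your identity and the paper's \eqref{eq:Psi_indicator} are the same), you bound $1_{\{\derm_{(t,z)}\XX_{\TT}=1\}}\le 1$ and integrate over the whole box, picking up its area $\TT\ubd=\prob(1)/\prob(0)$. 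The paper instead invokes Corollary~\ref{cor:Malliavin_necessary} to get the sharper pointwise bound $1_{\{\derm_{(t,z)}\XX_{\TT}=1\}}\le 1_{\{z\le\ins_t\}}$, which after integrating in $z$ leaves the random factor $\int_0^{\TT}\ins_t\,\diff t$; this is then bounded deterministically by $\int_0^{\TT}\frac{\Pheat_{\TT-t}\den(1)}{\Pheat_{\TT-t}\den(0)}\,\diff t=\LL$ via log-concavity and Corollary~\ref{cor:mean_mu}(3). So your route recovers Johnson's constant, while the paper's refinement of the same argument yields the improved constant $\LL$.
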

Note that  $\frac{\prob(1)}{\prob(0)}=\TT$ when $\prob=\pos_{\TT}$, so \eqref{eq:Wu_intro} and \eqref{eq:Johnson _intro} agree in this case. Our second main result shows that we can in fact improve the constant in the strongest modified logarithmic Sobolev inequalities for ultra-log-concave measures. 
\begin{theorem}
\label{thm:modified_LSI_ULC_into}
Let $\prob$ be an ultra-log-concave probability measure over $\N$. Then, for any positive $g\in L^2(\N,\prob)$,
\begin{equation}
\label{eq:modified _intro}
\Ent_{\prob}(g)\le \LL\,\EE_{\prob}[\Psi(g,\der g)],
\end{equation}
where $\Psi(u,v):=(u+v)\log(u+v)-u\log u-(\log u+1)v$.
\end{theorem}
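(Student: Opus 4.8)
The plan is to transfer Wu's modified logarithmic Sobolev inequality from the Poisson point process $\pmeasure$ to $\prob$ along the Poisson transport map $\XX_{\TT}$, using Theorem~\ref{thm:contraction_intro} both to bypass the missing chain rule and to extract the sharp constant. Fix a representation $\prob=\den\pos_{\TT}$ with $\den$ positive, bounded and log-concave, put $\ubd=\den(1)/\den(0)$, and observe that $\LL=\TT-\log\den(0)$. The first ingredient is the modified logarithmic Sobolev inequality for Poisson point processes \cite{MR1800540} (whose scalar counterpart is Theorem~\ref{thm:Wu_intro}): for the process $\pmeasure$ on $[0,\TT]\times[0,\ubd]$ and any positive functional $F$,
\[
\Ent_{\pmeasure}(F)\ \le\ \EE_{\pmeasure}\!\left[\int_0^{\TT}\!\!\int_0^{\ubd}\Psi\big(F,\derm_{(t,z)}F\big)\,\diff z\,\diff t\right].
\]
Applying this to $F=g\circ\XX_{\TT}$ and using that $\XX_{\TT}$ transports $\pmeasure$ to $\prob$ turns the left-hand side into $\Ent_{\prob}(g)$.

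For the right-hand side, note that $\derm_{(t,z)}(g\circ\XX_{\TT})=g\big(\XX_{\TT}+\derm_{(t,z)}\XX_{\TT}\big)-g(\XX_{\TT})$ by the definition of the Malliavin derivative, while Theorem~\ref{thm:contraction_intro} forces $\derm_{(t,z)}\XX_{\TT}\in\{0,1\}$. Hence this derivative equals either $0$ or $\der g(\XX_{\TT})$; since $\Psi(u,0)=0$, the integrand is $\1\{\derm_{(t,z)}\XX_{\TT}=1\}\,\Psi\big(g(\XX_{\TT}),\der g(\XX_{\TT})\big)$, whose second factor does not depend on $(t,z)$. Writing $A_{\TT}:=\{(t,z):\derm_{(t,z)}\XX_{\TT}=1\}$ and using $\Psi\ge 0$, the inequality above becomes
\[
\Ent_{\prob}(g)\ \le\ \EE_{\pmeasure}\!\Big[\Psi\big(g(\XX_{\TT}),\der g(\XX_{\TT})\big)\,\leb(A_{\TT})\Big].
\]
This is exactly how the contraction stands in for the chain rule: the nonlinearity only ever sees the discrete derivative $\der g$, as in the scalar inequality, at the price of the random weight $\leb(A_{\TT})$.

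It remains to show $\leb(A_{\TT})\le\LL$, $\pmeasure$-almost surely. First, if the added point at $(t,z)$ lies strictly above the curve at time $t$, i.e.\ $z>\ins_t$, then it is never counted by $(\XX_s)_{s\ge t}$; since the curve is adapted and the count strictly before time $t$ is unaffected, the whole trajectory --- and thus the curve itself --- is unchanged, so $\derm_{(t,z)}\XX_{\TT}=0$. Hence $A_{\TT}\subseteq\{(t,z):z\le\ins_t\}$ and $\leb(A_{\TT})\le\int_0^{\TT}\ins_t\,\diff t$. Next, in the Klartag--Lehec construction $\ins_t=h_t(\XX_t+1)/h_t(\XX_t)$ with $h_t:=\Pheat_{\TT-t}\den$ and $\Pheat_s$ the Poisson semigroup; as $\den$ is log-concave and $\Pheat_s$ preserves log-concavity, $\kk\mapsto h_t(\kk+1)/h_t(\kk)$ is non-increasing, so $\ins_t\le h_t(1)/h_t(0)$, which is deterministic. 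Finally, with $\phi(s):=(\Pheat_s\den)(0)$, the generator identity $\phi'(s)=(\Pheat_s\den)(1)-(\Pheat_s\den)(0)$ gives $h_t(1)/h_t(0)=1+(\log\phi)'(\TT-t)$, whence
\[
\int_0^{\TT}\ins_t\,\diff t\ \le\ \int_0^{\TT}\Big(1+(\log\phi)'(\TT-t)\Big)\diff t\ =\ \TT+\log\phi(\TT)-\log\phi(0)\ =\ \TT-\log\den(0)\ =\ \LL,
\]
using $\phi(0)=\den(0)$ and $\phi(\TT)=\EE_{\pos_{\TT}}[\den]=1$. Substituting into the previous display (again by $\Psi\ge0$) gives $\Ent_{\prob}(g)\le\LL\,\EE_{\pmeasure}[\Psi(g(\XX_{\TT}),\der g(\XX_{\TT}))]=\LL\,\EE_{\prob}[\Psi(g,\der g)]$, which is~\eqref{eq:modified _intro}.

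The crux is the last paragraph. Showing that $\derm_{(t,z)}\XX_{\TT}$ vanishes above the curve calls for a careful look at how the Malliavin ``add a point'' operation interacts with the \emph{adapted} construction of $(\XX_t)_{t\in[0,\TT]}$, and the bound $\ins_t\le h_t(1)/h_t(0)$ rests on the preservation of (ultra-)log-concavity along the Poisson semigroup. Note that the cruder estimate $\leb(A_{\TT})\le\TT\ubd=\prob(1)/\prob(0)$ already recovers Johnson's Theorem~\ref{thm:Johnson_into} by the very same argument; the improvement to $\LL$ amounts exactly to evaluating $\int_0^{\TT}h_t(1)/h_t(0)\,\diff t$.
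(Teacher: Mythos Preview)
The proposal is correct and follows essentially the same route as the paper: the paper obtains the statement as the special case $\Phi(r)=r\log r$ of Theorem~\ref{thm:PhiSoboleUlc_intro}, using Chafa\"{\i}'s inequality for $\pmeasure$ rather than Wu's, but the transport argument, the chain-rule substitute via $\derm_{(t,z)}\XX_{\TT}\in\{0,1\}$, the inclusion $A_{\TT}\subseteq\{z\le\ins_t\}$ (packaged in the paper as Corollary~\ref{cor:Malliavin_necessary}), and the evaluation $\int_0^{\TT}\Pheat_{\TT-t}\den(1)/\Pheat_{\TT-t}\den(0)\,\diff t=\LL$ (Corollary~\ref{cor:mean_mu}) are all identical to yours.
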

It will follow from our work (Corollary \ref{cor:mean_mu}) that $\LL\le  \frac{\prob(1)}{\prob(0)}$, so that \eqref{eq:modified _intro} improves on \eqref{eq:Johnson _intro}. (Note however that \eqref{eq:Johnson _intro} holds, with constant $1/c$, for the larger class of \emph{$c$-log-concave measures} \cite{MR3729642}.) Again, when $\prob=\pos_{\TT}$,  we have $\LL=\TT$.

\begin{remark}[The optimal constant]
\label{rem:opt}
Theorem \ref{thm:modified_LSI_ULC_into} raises the question of what is the optimal constant in modified logarithmic Sobolev inequalities for ultra-log-concave measures.  Fraser and Johnson  \cite[Corollary 2.4]{MR3006983} showed that the Poincar\'e inequality for ultra-log-concave measures  holds with a constant at least as good as $\EE[\prob]:=\EE_{Z\sim \prob}[Z]$. On the other hand, it will follow from our work (Corollary \ref{cor:mean_mu}) that 
\[
\EE[\prob]\le \LL\le \frac{\prob(1)}{\prob(0)},
\]
which begs the question of whether \eqref{eq:modified _intro} holds with constant $\EE[\prob]$. As evidence for an affirmative answer, it was shown by Aravinda, Marsiglietti, and Melbourne \cite[Theorem 1.1]{MR4420904}  that ultra-log-concave measures satisfy concentration inequalities with Poisson tail bounds. On the other hand, if 
the modified logarithmic Sobolev inequalities were to hold for ultra-log-concave measures with constant $\EE[\prob]$, the result  \cite[Theorem 1.1]{MR4420904} could be deduced from  the usual Herbst argument. 
\end{remark}

Theorem \ref{thm:modified_LSI_ULC_into} is in fact a corollary of the following more general result, namely, the validity of  Chafa\"{\i}'s  $\Phi$-Sobolev inequalities for ultra-log-concave measures; see Section \ref{subsec:Phi_Sobolev} for the precise definitions. 

\begin{theorem}
\label{thm:PhiSoboleUlc_intro}
Let $\prob$ be an ultra-log-concave probability measure over $\N$. Let $\mathcal I\subseteq \R$ be a closed interval, not necessarily bounded, and let $\Phi:\mathcal I\to \R$ be a smooth convex function. Suppose that the function
\[
\{(u,v)\in \R^2:(u,u+v)\in \mathcal I\times\mathcal I\}\ni (u,v)\quad\mapsto \quad \Psi(u,v):=\Phi(u+v)-\Phi(u)-\Phi'(u)v
\]
is nonnegative and convex. Then, for any $g\in L^2(\N,\prob)$, such that $\prob$-a.s. $g,g+\der g\in\mathcal I$,
\begin{equation}
\label{eq:PhiSobolevUlc_intro}
\Ent^{\Phi}_{\prob}(g)\le \LL\,\EE_{\prob}[\Psi(g,\der g)].
\end{equation}
\end{theorem}

We conclude with the following transport-entropy inequality for ultra-log-concave measures; see Section \ref{subsec:entropy-transport} for the precise definitions. 

\begin{theorem}
\label{thm:entropy-transport_ULC_intro}
Let $\prob=\den\pos_{\TT}$ be an ultra-log-concave probability measure on $\N$, and let  $\ubd:=\frac{\den(1)}{\den(0)}$. Then,  for any probability measure $\nu$ on $\N$ which is absolutely continuous with respect to $\prob$, and has a finite first moment, we have 
\begin{equation}
\label{eq:transport_inq_ULC_intro}
\alpha_{\TT\ubd}\left(W_{1,|\cdot|}(\nu,\prob)\right)\le H(\nu|\prob),
\end{equation}
where 
\[
\alpha_c(r):=c\left[\left(1+\frac{r}{c}\right)\log\left(1+\frac{r}{c}\right)-\frac{r}{c}\right].
\]
\end{theorem}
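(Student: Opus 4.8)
The plan is to transfer a transport-entropy inequality from the source Poisson point process $\pmeasure$ over $[0,\TT]\times[0,\ubd]$ onto the target $\prob$, exploiting that the Poisson transport map $\XX_\TT$ is $1$-Lipschitz. The first ingredient is the transport-entropy inequality for $\pmeasure$ itself. Write $\mathsf d(\omega,\omega')$ for the counting (symmetric-difference) cost between two finite point configurations --- the least number of point insertions and deletions carrying $\omega$ to $\omega'$ --- and $\mathcal W_{1,\mathsf d}$ for the associated $L^1$-transport distance. I will use that, for every $\mathbb Q\ll\pmeasure$ of finite first moment,
\[
\alpha_{\TT\ubd}\bigl(\mathcal W_{1,\mathsf d}(\mathbb Q,\pmeasure)\bigr)\le H(\mathbb Q\,|\,\pmeasure),
\]
where $\TT\ubd$ is the total intensity of $\pmeasure$. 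By Bobkov--G\"otze/Gozlan duality, together with the elementary identity $\alpha_c^{\ast}(s)=c(e^{s}-1-s)$, this is equivalent to the Laplace-transform bound $\log\EE_{\pmeasure}\bigl[e^{\lambda(G-\EE_{\pmeasure}G)}\bigr]\le \TT\ubd\,(e^{\lambda}-1-\lambda)$, valid for all $\lambda\ge 0$ and every functional $G$ on configuration space whose Malliavin derivative satisfies $\|\derm_{(t,z)}G\|_{\infty}\le 1$; this last bound is what the Herbst argument extracts from Wu's modified logarithmic Sobolev inequality for Poisson point processes (the process-level form of Theorem \ref{thm:Wu_intro}), the extremal case being $G=$ the total number of points, for which $G\sim\mathrm{Poisson}(\TT\ubd)$ and equality holds.

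Given $\nu\ll\prob$ of finite first moment, set $h:=\diff\nu/\diff\prob$ and lift it to configuration space by $\mathbb Q:=(h\circ\XX_\TT)\,\pmeasure$. Since $(\XX_\TT)_{\ast}\pmeasure=\prob$, a change of variables gives $(\XX_\TT)_{\ast}\mathbb Q=\nu$, and
\[
H(\mathbb Q\,|\,\pmeasure)=\EE_{\pmeasure}\bigl[(h\circ\XX_\TT)\log(h\circ\XX_\TT)\bigr]=\EE_{\prob}[h\log h]=H(\nu\,|\,\prob),
\]
so the lift preserves relative entropy exactly. Next, Theorem \ref{thm:contraction_intro} gives $\derm_{(t,z)}\XX_\TT\in\{0,1\}$, which says that inserting or deleting a single point changes $\XX_\TT$ by at most $1$; iterating along a minimal insertion/deletion path yields $|\XX_\TT(\omega)-\XX_\TT(\omega')|\le\mathsf d(\omega,\omega')$, i.e. $\XX_\TT$ is a $1$-Lipschitz map from $(\text{configurations},\mathsf d)$ to $(\N,|\cdot|)$. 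Pushing any coupling of $(\mathbb Q,\pmeasure)$ forward by $\XX_\TT\times\XX_\TT$ therefore produces a coupling of $(\nu,\prob)$ of no greater cost, so $W_{1,|\cdot|}(\nu,\prob)\le\mathcal W_{1,\mathsf d}(\mathbb Q,\pmeasure)$.

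Combining the last two displays with the monotonicity of $r\mapsto\alpha_{\TT\ubd}(r)$ on $[0,\infty)$ gives
\[
\alpha_{\TT\ubd}\bigl(W_{1,|\cdot|}(\nu,\prob)\bigr)\le\alpha_{\TT\ubd}\bigl(\mathcal W_{1,\mathsf d}(\mathbb Q,\pmeasure)\bigr)\le H(\mathbb Q\,|\,\pmeasure)=H(\nu\,|\,\prob),
\]
which is the claim. I expect the main obstacle to be the first paragraph: pinning down the transport-entropy (equivalently, the Laplace) inequality for $\pmeasure$ with the \emph{sharp} constant equal to the total intensity $\TT\ubd$ and with the counting cost $\mathsf d$, since it is precisely the compatibility of $\mathsf d$ with the $\{0,1\}$-valued Malliavin derivative of $\XX_\TT$ that makes the Lipschitz transfer close up. The remaining points --- measurability of the lift, finiteness of all quantities under the finite-first-moment hypothesis (note $\prob$ automatically has finite first moment, its density being bounded), and upgrading the $\pmeasure$-almost-sure bound of Theorem \ref{thm:contraction_intro} to a pathwise Lipschitz estimate on configurations --- are routine but should be verified.
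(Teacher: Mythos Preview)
Your proposal is correct and follows essentially the same route as the paper: a transport--entropy inequality for $\pmeasure$, the $1$-Lipschitz property of $\XX_\TT$ from Theorem~\ref{thm:contraction_intro}, and pushforward of couplings. The paper resolves your ``main obstacle'' by citing the result of Ma--Shen--Wang--Wu directly rather than rederiving it via Bobkov--G\"otze duality and Herbst, and it handles the entropy step via the variational identity $H(\nu|\prob)=\inf\{H(Q|\pmeasure):Q\circ\XX_\TT^{-1}=\nu\}$ rather than your explicit density lift $\mathbb Q=(h\circ\XX_\TT)\pmeasure$ (your version is in fact slightly cleaner, since it exhibits a $Q$ achieving the infimum).
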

The constant $\TT\ubd$ in \eqref{eq:transport_inq_ULC_intro} can in fact be improved; cf. Remark \ref{rem:better_c}.

\subsection*{Organization of paper} In Section \ref{sec:ulc}
 we review some of the basics of ultra-log-concave measures, as well as the basics of the  Poisson semigroup. Section \ref{sec:poisson_transport} provides the construction of the Poisson transport map, as well as some of its properties. In Section \ref{sec:contraction} we prove our  contraction theorem (Theorem \ref{thm:contraction_intro}). In addition, in Section \ref{subsec:BrownianVsPoisson}, we compare and contrast the Brownian transport map and the Poisson transport map. Finally, in Section \ref{sec:func_inq} we prove our functional inequalities (Theorem \ref{thm:modified_LSI_ULC_into}, Theorem \ref{thm:PhiSoboleUlc_intro}, and Theorem \ref{thm:entropy-transport_ULC_intro}). 

\subsection*{Acknowledgments} 
We are grateful to Joseph Lehec, Arnaud Marsiglietti, and Avelio Sep\'{u}lveda for their valuable comments. We would like to extend special thanks to Nachi Avraham Re'em and Max Fathi for their many helpful remarks on this manuscript. We are also very grateful to the referees whose comments have significantly improved the quality of this work. 

This project has received funding from the European Union's Horizon 2020 research and innovation programme under the Marie Sk\l{}odowska-Curie grant agreement No 945332. This work has also received support under the program ``Investissement d'Avenir" launched by the French Government and implemented by ANR, with the reference ``ANR-18-IdEx-0001" as part of its program ``Emergence".  This work received funding from the Agence Nationale de la Recherche (ANR) Grant ANR-23-CE40-0003 (Project CONVIVIALITY), as well as funding from the Institut Universitaire de France.

This material is based upon work supported by the National Science Foundation under awards DMS-2331920 and 2002022. 
\section{Ultra-log-concave measures}
\label{sec:ulc}

In this section we establish some of the properties of ultra-log-concave measures that will be used throughout the paper. We will denote $\N:=\{0,1,2,\ldots\}$ the set of nonnegative integers, $\Z_+:=\{1,2,\ldots\}$, and by  $\pos_{\ins}$ the Poisson measure on $\N$ with intensity $\ins>0$,
\[
\pos_{\ins}(\kk):=e^{-\ins}\frac{\ins^{\kk}}{\kk!},\quad \kk\in \N.
\]
We say that a positive function $\den:\N\to \R_{> 0}$ is \emph{log-concave} if 
\begin{equation}
\label{eq:lc_def_ulc}
\den^2(\kk)\ge \den(\kk-1)\den(\kk+1),\quad \forall ~\kk\in\Z_+.
\end{equation}
Equivalently, $\den:\N\to \R_{> 0}$ is log-concave if the function
\begin{equation}
\label{eq:lc_def_ulc_mono}
\Z_+\ni\kk\mapsto \frac{\den(\kk)}{\den(\kk-1)}\text{ is non-increasing}.
\end{equation}

The following definition captures the intuition of a probability measure being more log-concave than a Poisson measure. 
\begin{definition}
\label{def:ulc}
A probability $\prob$ on $\N$ is \emph{ultra-log-concave} if there exists $\ins>0$, and a  positive  log-concave function $\den$, such that $\prob(\kk)=\den(\kk)\pos_{\ins}(\kk)$ for all $\kk\in \N$.
\end{definition}
The intensity $\ins$ in Definition \ref{def:ulc} does not in fact play any role. It is readily verified from the definition that $\prob$ is ultra-log-concave, with respect to \emph{any} intensity $\ins>0$, if and only if,
\begin{equation}
\label{eq:ulc}
\prob^2(\kk)\ge\frac{\kk+1}{\kk}\prob(\kk+1)\prob(\kk-1),\quad \forall ~\kk\in\Z_+.
\end{equation}
In other words, once $\prob$ is more log-concave than $\pos_{\ins}$ \emph{for some} $\ins$, it is in fact more log-concave than $\pos_{\ins}$ \emph{for all} $\ins$.

The \emph{Poisson semigroup} $(\Pheat_t)_{t\ge 0}$ will play an important role in our work. Given a function $g:\N\to\R$ we define, for $t\ge 0$,
\[
\Pheat_0g:=g,\quad\text{and}\quad \Pheat_tg(\kk):=\sum_{n=0}^{\infty}g(\kk+n)\pos_t(n),\quad \forall ~ \kk\in \N,\quad t>0.
\]
The Poisson semigroup satisfies the identity
\begin{equation}
\label{eq:poisson_pde}
\partial_t(\Pheat_tg)(\kk)=\der(\Pheat_tg)(\kk),\quad\forall ~\kk\in \N,
\end{equation}
where
\[
\der h(\kk):=h(\kk+1)-h(\kk),
\]
for any $h:\N\to\R$. Fix a time $T>0$. For future reference, given nonnegative $\den:\N\to \R$, we set 
\begin{equation}
\label{eq:F_def}
\Den(t,\kk):=\log \Pheat_{\TT-t}\den(\kk)\quad\text{which satisfies}\quad \partial_t\Den(t,\kk)=-e^{\der \Den(t,\kk)}+1,\quad\forall ~t\in [0,\TT], ~\kk\in \N.
\end{equation}

Our next result shows that the Poisson semigroup preserves log-concavity. While a number of proofs are available, our proof will mimic the proof of the fact that the heat semigroup preserves log-concavity. The latter is a consequence of the Pr\'ekopa-Leindler  inequality, so we will use a discrete analogue of the Pr\'ekopa-Leindler  inequality proven by Klartag and Lehec. 
\begin{proposition}
\label{prop:poisson_presev}
$\den:\N\to \R_{>0}$ be a log-concave function. Then, for any $t\ge 0$, $\Pheat_t\den$ is a log-concave function.
\end{proposition}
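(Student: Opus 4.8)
The plan is to mimic the classical proof that the heat semigroup preserves log-concavity: there one writes $\Pheat_t\den=\den*\gamma_t$ as a convolution of two log-concave functions and invokes the Pr\'ekopa--Leindler inequality (equivalently, Pr\'ekopa's theorem that marginals of log-concave functions are log-concave). Here $\Pheat_t\den(\kk)=\sum_{n\ge 0}\den(\kk+n)\pos_t(n)$ is, up to a reflection, a discrete convolution of the log-concave function $\den$ with the log-concave Poisson weight $\pos_t$, so I would deduce log-concavity of $\Pheat_t\den$ from the discrete Pr\'ekopa--Leindler inequality of Klartag and Lehec. The form I would use is: if $\phi_1,\phi_2,\psi\colon\Z\to[0,\infty)$ satisfy $\phi_1(a)\,\phi_2(b)\le \psi\big(\lfloor\tfrac{a+b}{2}\rfloor\big)\,\psi\big(\lceil\tfrac{a+b}{2}\rceil\big)$ for all $a,b\in\Z$, then $\big(\sum_a\phi_1(a)\big)\big(\sum_b\phi_2(b)\big)\le\big(\sum_n\psi(n)\big)^2$.

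First I would dispose of the trivial case $t=0$ and note that for $t>0$ log-concavity gives $\den(\kk)\le\den(0)(\den(1)/\den(0))^{\kk}$, so the super-geometric decay of $\pos_t$ makes every $\Pheat_t\den(\kk)$ finite. Fix $\kk\in\{1,2,\dots\}$; we must show $\Pheat_t\den(\kk)^2\ge\Pheat_t\den(\kk-1)\,\Pheat_t\den(\kk+1)$. I would set, for $n\ge0$ (and $0$ for negative arguments),
\[
h_{-}(n):=\den(\kk-1+n)\,\pos_t(n),\qquad h_{0}(n):=\den(\kk+n)\,\pos_t(n),\qquad h_{+}(n):=\den(\kk+1+n)\,\pos_t(n),
\]
so that $\sum h_{-}=\Pheat_t\den(\kk-1)$, $\sum h_{0}=\Pheat_t\den(\kk)$, $\sum h_{+}=\Pheat_t\den(\kk+1)$. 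By the discrete Pr\'ekopa--Leindler inequality it then suffices to check $h_{-}(a)\,h_{+}(b)\le h_{0}\big(\lfloor\tfrac{a+b}{2}\rfloor\big)h_{0}\big(\lceil\tfrac{a+b}{2}\rceil\big)$ for all $a,b$; since $\pos_t(a)$ and $\pos_t(b)$ both appear, we may assume $a,b\ge0$. Splitting off the Poisson factors, this reduces to the two inequalities
\[
\pos_t(a)\,\pos_t(b)\le\pos_t\big(\lfloor\tfrac{a+b}{2}\rfloor\big)\,\pos_t\big(\lceil\tfrac{a+b}{2}\rceil\big)
\]
and
\[
\den(\kk-1+a)\,\den(\kk+1+b)\le\den\big(\kk+\lfloor\tfrac{a+b}{2}\rfloor\big)\,\den\big(\kk+\lceil\tfrac{a+b}{2}\rceil\big).
\]
Both follow from the elementary consequence of \eqref{eq:lc_def_ulc_mono} that, for a log-concave sequence $w$ and integers $i\le i'\le j'\le j$ with $i+j=i'+j'$, one has $w(i)w(j)\le w(i')w(j')$ (balancing a pair of fixed sum increases the product). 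The first inequality is immediate because $\{\lfloor\tfrac{a+b}{2}\rfloor,\lceil\tfrac{a+b}{2}\rceil\}$ is the most balanced pair of integers summing to $a+b$. For the second, the pairs $\{\kk-1+a,\kk+1+b\}$ and $\{\kk+\lfloor\tfrac{a+b}{2}\rfloor,\kk+\lceil\tfrac{a+b}{2}\rceil\}$ have the same sum $2\kk+a+b$; since $2+b-a$ and $a+b$ have the same parity, the gap $|2+b-a|$ of the first pair is at least the gap (which is $0$ or $1$) of the second, so the second pair is at least as balanced, and all four indices are nonnegative (using $\kk\ge1$). Hence the displayed inequalities hold, the pointwise hypothesis of discrete Pr\'ekopa--Leindler is verified, and the proposition follows.

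The step I expect to be the main obstacle is this last verification: $\Z$ has no exact midpoints, so the discrete Pr\'ekopa--Leindler inequality forces a floor/ceiling rounding of midpoints, and one must confirm that log-concavity of $\den$ is compatible with that rounding even in the presence of the asymmetric shifts $\kk\pm1$ coming from the very inequality one is trying to prove (the Poisson factor being harmless since Poisson weights are log-concave). Picking the slices $h_{-},h_{0},h_{+}$ so that this works is the only delicate point; the convergence of the sums and the rest are routine.
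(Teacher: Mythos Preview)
Your proof is correct and follows essentially the same route as the paper: both arguments reduce the claim to the discrete Pr\'ekopa--Leindler inequality of Klartag and Lehec, with the paper invoking their Proposition~5.1 as a black box (which already packages the implication from concavity of $V=\log\den$ to \eqref{eq:heat_lc_proof}) while you unwrap the verification of the pointwise hypothesis explicitly, splitting it into the Poisson and $\den$ factors via the balancing lemma. The parity observation you use to handle the asymmetric shifts $k\pm1$ is exactly the content hidden inside that citation.
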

\begin{proof}
Let $V:=\log \den$. Our goal is to show that
\[
(\Pheat_t e^V(\kk))^2\ge \Pheat_t e^V(\kk+1)\Pheat_t e^V(\kk-1)\quad\forall~\kk\in \Z_+,
\]
which, by definition, is equivalent to 
\begin{equation}
\label{eq:heat_lc_proof}
\left(\sum_{n=0}^{\infty}e^{V(\kk+n)}\pos_t(n)\right)^2\ge \left(\sum_{n=0}^{\infty}e^{V(\kk+1+n)}\pos_t(n)\right)\left(\sum_{n=0}^{\infty}e^{V(\kk-1+n)}\pos_t(n)\right).
\end{equation}
The discrete Pr\'ekopa-Leindler  inequality \cite[Proposition 5.1]{Klartag_Lehec} implies that for all functions $W,Y,Z:\N\to \R$,
\begin{equation}
\begin{split}
&W(\ell)+Y(m)\le Z\left(\left\lfloor\frac{\ell+m}{2}\right\rfloor\right)+Z\left(\left\lceil\frac{\ell+m}{2}\right\rceil\right),\quad \forall~\ell,m\in \N,\label{eq:DPL}\\
&\Longrightarrow \\
& \left(\sum_{n=0}^{\infty}e^{Z(n)}\pos_t(n)\right)^2\ge \left(\sum_{n=0}^{\infty}e^{W(n)}\pos_t(n)\right)\left(\sum_{n=0}^{\infty}e^{Y(n)}\pos_t(n)\right).
\end{split}
\end{equation}
To apply \eqref{eq:DPL} we fix $\kk\in \N$ and define $Z(n):=V(k+n), ~ W(n):=V(k+1+n)$, and $Y(n):=V(k-1+n)$, so that to establish \eqref{eq:heat_lc_proof} it suffices to show 
\begin{equation}
\label{eq:DPL_ass}
V(k+1+\ell)+V(k-1+m)\le V\left(k+\left\lfloor\frac{\ell+m}{2}\right\rfloor\right)+V\left(k+\left\lceil\frac{\ell+m}{2}\right\rceil\right)\quad\forall~\ell,m\in \N.
\end{equation}
To verify \eqref{eq:DPL_ass}  we note that the log-concavity of $\den$ implies that 
\begin{equation}
\label{eq:concave_def1}
2V(m)\ge V(m+1)+V(m-1)\quad\forall~m\in \Z_+,
\end{equation}
which is in fact  equivalent to 
\begin{equation}
\label{eq:concave_def2}
V(p)+V(q)\le V\left(\left\lfloor\frac{p+q}{2}\right\rfloor\right)+V\left(\left\lceil\frac{p+q}{2}\right\rceil\right)\quad\forall~p,q\in \N.
\end{equation}
Taking $p=k+1+\ell$ and $q=k-1+m$ in \eqref{eq:concave_def2} yields
\begin{align*}
V(k+1+\ell)+V(k-1+m)&\le V\left(\left\lfloor\frac{2k+\ell+m}{2}\right\rfloor\right)+V\left(\left\lceil\frac{2k+\ell+m}{2}\right\rceil\right)\\
&=V\left(k+\left\lfloor\frac{\ell+m}{2}\right\rfloor\right)+V\left(k+\left\lceil\frac{\ell+m}{2}\right\rceil\right),
\end{align*}
thus deducing \eqref{eq:DPL_ass}.
\end{proof}

As a consequence of the preservation of log-concavity let us deduce a number of corollaries which will be helpful later on. 
\begin{corollary}
\label{cor:mono_ratio}
Let $\den:\N\to \R_{> 0}$ be a log-concave function. Fix $\TT>0$ and $\kk\in \N$. The map
\[
[0,\TT]\ni t\mapsto \frac{\Pheat_{\TT-t}\den(\kk+1)}{\Pheat_{\TT-t}\den(\kk)}
\]
is non-decreasing.
\end{corollary}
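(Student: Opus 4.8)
The plan is to reduce the monotonicity of the ratio to a sign computation for the time derivative of its logarithm, and then invoke the fact that the Poisson semigroup preserves log-concavity (Proposition~\ref{prop:poisson_presev}). Set $\Den(t,\kk):=\log\Pheat_{\TT-t}\den(\kk)$ as in \eqref{eq:F_def}. Since $\den$ is positive and log-concave, the ratios $\den(\kk)/\den(\kk-1)$ are non-increasing by \eqref{eq:lc_def_ulc_mono}, so $\den(\kk)\le\den(0)\big(\den(1)/\den(0)\big)^{\kk}$ grows at most geometrically; hence $\Pheat_{\TT-t}\den(\kk)\in(0,\infty)$ for every $t\in[0,\TT]$ and $\kk\in\N$, and $t\mapsto\Pheat_{\TT-t}\den(\kk)$ is smooth, the defining series being differentiable term by term. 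Consequently $\Den(\cdot,\kk)$ is well defined and smooth, and since $\log$ is increasing it suffices to prove that
\[
t\;\longmapsto\;\der\Den(t,\kk)=\log\frac{\Pheat_{\TT-t}\den(\kk+1)}{\Pheat_{\TT-t}\den(\kk)}
\]
is non-decreasing, i.e.\ $\partial_t\der\Den(t,\kk)\ge0$.

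I would then compute this derivative using the evolution equation recorded in \eqref{eq:F_def}. Because $\partial_t$ commutes with the spatial discrete derivative $\der$,
\[
\partial_t\der\Den(t,\kk)=\der\big(\partial_t\Den(t,\cdot)\big)(\kk)=\der\big(1-e^{\der\Den(t,\cdot)}\big)(\kk)=e^{\der\Den(t,\kk)}-e^{\der\Den(t,\kk+1)},
\]
so the desired inequality is equivalent to $\der\Den(t,\kk)\ge\der\Den(t,\kk+1)$, i.e.\ to the map $\kk\mapsto\der\Den(t,\kk)$ being non-increasing for each fixed $t\in[0,\TT]$.

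Finally, I would close the loop with Proposition~\ref{prop:poisson_presev}: for each $t$ the function $\kk\mapsto\Pheat_{\TT-t}\den(\kk)$ is log-concave, which by \eqref{eq:lc_def_ulc_mono} means precisely that $\kk\mapsto\Pheat_{\TT-t}\den(\kk+1)/\Pheat_{\TT-t}\den(\kk)$ is non-increasing, equivalently that $\kk\mapsto\der\Den(t,\kk)$ is non-increasing. Combined with the previous display this yields $\partial_t\der\Den(t,\kk)\ge0$ and hence the corollary. There is no genuine difficulty here: the substance is entirely in Proposition~\ref{prop:poisson_presev}, and the remaining points---positivity and finiteness of $\Pheat_{\TT-t}\den$, its smoothness in $t$, and the commutation of $\partial_t$ with $\der$---are routine consequences of the at-most-geometric growth of log-concave functions on $\N$.
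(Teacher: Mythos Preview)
Your proof is correct and follows essentially the same approach as the paper: both differentiate the ratio (you take the logarithm first, the paper works with the ratio directly) and reduce the sign of the derivative to the log-concavity of $\Pheat_{\TT-t}\den$ via Proposition~\ref{prop:poisson_presev}. The computations are equivalent up to the positive factor $e^{\der\Den(t,\kk)}$, so there is no substantive difference.
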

\begin{proof}
Define $\theta: [0,\TT]\to \R$ by $\theta(t):=\frac{\Pheat_{\TT-t}\den(\kk+1)}{\Pheat_{\TT-t}\den(\kk)}$, so we need to show that $\theta'(t)\ge 0$. Indeed, by \eqref{eq:poisson_pde},
\begin{align*}
&\theta'(t)=\frac{-\partial_t(\Pheat_{\TT-t}\den)(\kk+1)}{\Pheat_{\TT-t}\den(\kk)}-\frac{\Pheat_{\TT-t}\den(\kk+1)(-\partial_t(\Pheat_{\TT-t}\den)(\kk))}{(\Pheat_{\TT-t}\den(\kk))^2}\\
&=\frac{\Pheat_{\TT-t}\den(\kk+1)(\der(\Pheat_{\TT-t}\den)(\kk))}{(\Pheat_{\TT-t}\den(\kk))^2}-\frac{\der(\Pheat_{\TT-t}\den)(\kk+1)}{\Pheat_{\TT-t}\den(\kk)}\\
&=\frac{1}{(\Pheat_{\TT-t}\den(\kk))^2}\left\{\Pheat_{\TT-t}\den(\kk+1)[\Pheat_{\TT-t}\den(\kk+1)-\Pheat_{\TT-t}\den(\kk)]-\Pheat_{\TT-t}\den(\kk)[\Pheat_{\TT-t}\den(\kk+2)-\Pheat_{\TT-t}\den(\kk+1)]\right\}\\
&=\frac{1}{(\Pheat_{\TT-t}\den(\kk))^2}\left\{(\Pheat_{\TT-t}\den)^2(\kk+1)-\Pheat_{\TT-t}\den(\kk)\Pheat_{\TT-t}\den(\kk+2)\right\}\ge 0,
\end{align*}
where the last inequality holds by Proposition \ref{prop:poisson_presev}. 
\end{proof}

\begin{corollary}
\label{cor:G_is_bounded}
Let $\den:\N\to \R_{> 0}$ be a log-concave function. Fix $\TT>0$. Then, for any $t\in [0,\TT]$ and $\kk\in\N$,
\begin{equation}
\label{eq:G_is_bounded}
 \frac{\Pheat_{\TT-t}\den(\kk+1)}{\Pheat_{\TT-t}\den(\kk)}\le  \frac{\den(1)}{\den(0)}.
\end{equation}
\end{corollary}

\begin{proof}
Fix $k\in \N$. By  Corollary \ref{cor:mono_ratio} the function $[0,\TT]\ni t\mapsto \frac{\Pheat_{\TT-t}\den(\kk+1)}{\Pheat_{\TT-t}\den(\kk)}$ is non-decreasing, so
\[
\forall~t\in [0,\TT],\quad \frac{\Pheat_{\TT-t}\den(\kk+1)}{\Pheat_{\TT-t}\den(\kk)}\le \frac{\den(\kk+1)}{\den(\kk)}.
\]
On the other hand, by \eqref{eq:lc_def_ulc_mono}, the function $\N\ni \kk\mapsto \frac{\den(\kk+1)}{\den(\kk)}$ is non-increasing, so 
\[
\forall~t\in [0,\TT],\quad \frac{\Pheat_{\TT-t}\den(\kk+1)}{\Pheat_{\TT-t}\den(\kk)}\le \frac{\den(\kk+1)}{\den(\kk)}\le  \frac{\den(1)}{\den(0)}.
\]
\end{proof}

\section{The Poisson transport map}
\label{sec:poisson_transport}
In this section we construct the Poisson transport map. In Section \ref{subsec:poisson_space} we recall the construction of the canonical space for the Poisson point process, as well as the basics of the Malliavin calculus on this space. We will use \cite{Last, Peccati_Bourguin} as our references. In Section \ref{subsec:poisson_transport} we describe the process $(\XX_t^{\ins})_{t\in [0,\TT]}$ constructed  by Klartag and Lehec \cite{Klartag_Lehec}, which we interpret as a transport map from the Poisson measure on the canonical space onto probability measures over $\N$. 

\subsection{The Poisson space}
\label{subsec:poisson_space}
Fix a real number $\TT>0$. Let $\prob=\den\pos_{\TT}$ be an ultra-log-concave probability measure on $\N$, where $\den:\N\to\R_{> 0}$ is a positive log-concave function. Set $\ubd:=\frac{\den(1)}{\den(0)}$, and let $\state:=[0,\TT]\times [0,\ubd]$.  We let $\sig$ be the sigma-algebra generated by the Borel sets of $\state$ endowed with the product topology, and we let $\leb$ be the Lebesgue measure on $\sig$. We define the Poisson space $(\pstate,\psig,\pmeasure)$ over $(\state,\sig,\leb)$ by letting the probability space be
\[
\pstate:=\left\{\omega:\omega=\sum_i\delta_{(t_i,z_i)},~ (t_i,z_i)\in \state ~(\text{at most countable})\right\},
\]
the sigma-algebra be
\[
\psig:=\sigma\left(\pstate\ni\omega\mapsto \omega(B): B\in \sig \right),
\]
and defining the probability measure $\pmeasure$ by 
\begin{align*}
&\forall~ B\in \sig,~ \forall~ \kk\in \N, \quad\pmeasure[\{\omega(B)=\kk\}]=\pos_{\leb(B)}(\kk),\\
&\forall ~n\in \Z_+,\quad \omega(B_1),\ldots,\omega(B_n)\text{ are $\pmeasure$-independent if $B_1,\ldots, B_n\in \sig$ are disjoint}.
\end{align*}
Given a measurable function $G:\pstate\to \R$, we define the Malliavin derivative $\derm$ of $G$ as the function $\derm G:\pstate\times \state\to \R$ given by
\begin{equation}
\label{eq:Malliavin_der_def}
\derm_{(t,z)} G(\omega):= G(\omega+\delta_{(t,z)})-G(\omega),\quad\forall~ (t,z)\in \state,\quad\forall~\omega\in \pstate. 
\end{equation}
Of particular importance to us will be binary Malliavin derivatives, for which one has the following chain rule.
\begin{lemma}
\label{lem:chain_rule}
Let $G:\pstate\to \N$ be a measurable function  such that $\derm_{(t,z)} G\in \{0,1\}$ for all $(t,z)\in \state$. Then, for any $g:\N\to \R$,
\begin{align}
\label{eq:chain_rule}
\derm_{(t,z)}(g\circ G)=\der g(G)\cdot\derm_{(t,z)}G\quad\forall~ (t,z)\in \state.
\end{align}
\end{lemma}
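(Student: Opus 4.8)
The plan is to unwind the definition of the Malliavin derivative \eqref{eq:Malliavin_der_def} and then argue pointwise in $\omega\in\pstate$ by a two-case analysis on the value of $\derm_{(t,z)}G(\omega)$. Implicitly $G$ is $\N$-valued here (as it will be for $G=\XX_{\TT}$), so that $g\circ G$ and the evaluation $\der g(G)=g(G+1)-g(G)$ are well defined; this is the only standing assumption worth flagging. Fix $(t,z)\in\state$ and $\omega\in\pstate$. By \eqref{eq:Malliavin_der_def} applied to the function $g\circ G:\pstate\to\R$,
\[
\derm_{(t,z)}(g\circ G)(\omega)=g\bigl(G(\omega+\delta_{(t,z)})\bigr)-g\bigl(G(\omega)\bigr).
\]
By hypothesis $\derm_{(t,z)}G(\omega)=G(\omega+\delta_{(t,z)})-G(\omega)\in\{0,1\}$, so there are exactly two cases.

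If $\derm_{(t,z)}G(\omega)=0$, then $G(\omega+\delta_{(t,z)})=G(\omega)$, hence the displayed difference vanishes, and the right-hand side of \eqref{eq:chain_rule} also vanishes since it carries the factor $\derm_{(t,z)}G(\omega)=0$. If $\derm_{(t,z)}G(\omega)=1$, then $G(\omega+\delta_{(t,z)})=G(\omega)+1$, so the displayed difference equals $g(G(\omega)+1)-g(G(\omega))=\der g(G(\omega))=\der g(G(\omega))\cdot\derm_{(t,z)}G(\omega)$. In both cases $\derm_{(t,z)}(g\circ G)(\omega)=\der g(G(\omega))\cdot\derm_{(t,z)}G(\omega)$, which is exactly \eqref{eq:chain_rule}. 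Since $(t,z)$ and $\omega$ were arbitrary, the identity holds everywhere on $\state\times\pstate$.

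There is essentially no genuine obstacle: the statement is a direct consequence of the fact that a $\{0,1\}$-valued increment either leaves $G$ fixed or advances it by exactly one unit, the latter producing precisely the discrete derivative $\der g$. The one conceptual point to keep in mind is that the argument breaks if $\derm_{(t,z)}G$ is allowed to equal $-1$ (one would then get $-\der g(G-1)$, not $\der g(G)\cdot(-1)$), which is why the sharpened range $\{0,1\}$ in Theorem \ref{thm:contraction_intro} — rather than the a priori range $\{-1,0,1\}$ implied by $1$-Lipschitzness — is what makes this clean chain rule, and hence the transfer of functional inequalities, go through.
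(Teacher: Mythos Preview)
Your proof is correct and follows essentially the same route as the paper's: a pointwise two-case analysis on whether $\derm_{(t,z)}G(\omega)$ equals $0$ or $1$, unwinding the definition of the Malliavin derivative in each case. Your added remarks—that $G$ is implicitly $\N$-valued and that the argument would fail for increments equal to $-1$—are accurate and useful context, but the core argument matches the paper's exactly.
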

\begin{proof}
Fix $\omega\in\pstate$ and $(t,z)\in \state$. If $\derm_{(t,z)}G(\omega)=G(\omega+\delta_{(t,z)})-G(\omega)=0$, then
\[
\derm_{(t,z)}(g\circ G(\omega))=g(G(\omega+\delta_{(t,z)}))-g(G(\omega))=0,
\]
since $G(\omega+\delta_{(t,z)})=G(\omega)$, which establishes \eqref{eq:chain_rule}. Suppose then that $\derm_{(t,z)}G(\omega)=1$, so that $G(\omega+\delta_{(t,z)})=G(\omega)+1$. Then, 
\begin{align*}
\derm_{(t,z)}(g\circ G(\omega))&=g(G(\omega+\delta_{(t,z)}))-g(G(\omega))=g(G(\omega)+1)-g(G(\omega))=\der g(G(\omega))\\
&=\der g(G(\omega))\derm_{(t,z)} G(\omega),
\end{align*}
where in the last equality we used $\derm_{(t,z)} G(\omega)=1$. 
\end{proof}

\subsection{The  Poisson transport map}
\label{subsec:poisson_transport}
Our construction of the Poisson transport map is based on the stochastic process used by Klartag and Lehec in \cite{Klartag_Lehec} (whose origin can be found in Budhiraja, Dupuis, and Maroulas \cite{MR2841073}). Let the notation and assumptions of Section \ref{subsec:poisson_space} hold. Given $t\in [0,\TT]$ let $\sig_t$ be the sigma-algebra generated by the Borel sets of $[0,t]\times  [0,\ubd]$ endowed with the product topology, and define the sigma-algebra $\psig_t$ on $\pstate$ by
\[
\psig_t:=\sigma\left(\pstate\ni\omega\mapsto \omega(B): B\in \sig_t\right).
\]
We say that a stochastic process $(\ins_t)_{t\in [0,\TT]}$, where $\ins_t:\pstate\to \R$, is \emph{predictable} if the function $(t,\omega)\mapsto \ins_t(\omega)$ is measurable with respect to $\sigma\left(\{(s,t]\times B: s\le t\le \TT, ~B\in \psig_s\}\right)$. Given a predictable nonnegative stochastic process  $\ins:=(\ins_t)_{t\in [0,\TT]}$, such that $\ins_t\le \ubd$ for all $t\in [0,\TT]$, we define the stochastic counting process $(\XX_t^{\ins})_{t\in [0,\TT]}$ by
\begin{equation}
\label{eq:X_def}
\XX_t^{\ins}(\omega)=\omega\left(\left\{(s,x)\in \state: s< t,~ x\le \ins_s(\omega)\right\}\right),\quad\text{(see Figure \ref{fig:X})}.
\end{equation}

Note that $(\XX_t^{\ins})_{t\in [0,\TT]}$ is a non-decreasing integer-valued left-continuous process such that $\XX_t^{\ins}$ is $\psig_t$-measurable for all $t\in [0,\TT]$, and hence $(\XX_t^{\ins})_{t\in [0,\TT]}$ is predictable. In addition, almost-surely, there are only finitely many jumps of $(\XX_t^{\ins})_{t\in [0,\TT]}$, each of which is of size 1. Thus, the process $(\XX_t^{\ins})_{t\in [0,\TT]}$ is a Poisson process with \emph{stochastic intensity} $\ins$. We will work with a specific stochastic intensity $\ins$, namely, we will take the stochastic intensity $\ins^*$ defined by the equation
\begin{equation}
\label{eq:lambda_opt}
\ins^*_t(\omega)=\frac{ \Pheat_{\TT-t}\den(\XX_t^{\ins^*(\omega)}(\omega)+1)}{ \Pheat_{\TT-t}\den(\XX_t^{\ins^*(\omega)}(\omega))}=e^{\der \Den\left(t,\XX_t^{\ins^*(\omega)}(\omega)\right)},
\end{equation}
where we recall \eqref{eq:F_def}. The existence of a solution to \eqref{eq:lambda_opt} was given in \cite[Lemma 4.3]{Klartag_Lehec} via a fixed-point argument, which requires the function $\kk\mapsto \frac{\Pheat_{\TT-t}\den(\kk+1)}{\Pheat_{\TT-t}\den(\kk)}$ to be bounded  for each $t\in [0,\TT]$. In \cite{Klartag_Lehec}, $\den$ itself is assumed to be bounded, which gives the necessary condition, but in our case the log-concavity of $\den$ suffices by Corollary \ref{cor:G_is_bounded}. Note that $\pmeasure$-a.s. $ t \mapsto \ins_t^*$ is continuous except at finitely many points, and in addition, by Corollary \ref{cor:G_is_bounded}, 
\begin{equation}
\label{eq:lambda_M}
\ins_t^*\le  \frac{\den(1)}{\den(0)}=\ubd.
\end{equation}
To ease the notation, from here on, we will denote
\begin{equation}
\label{eq:Xstar_def}
\XX:=(\XX_t)_{t\in [0,\TT]}:=(\XX_t^{\ins^*})_{t\in [0,\TT]} \quad\text{and}\quad \ins:=(\ins_t)_{t\in [0,\TT]}:=(\ins_t^*)_{t\in [0,\TT]}.
\end{equation}
The next lemma provides the time marginals of $\XX$.
\begin{lemma}
\label{lem:X_T}
Let $\XX$ be the process defined by \eqref{eq:Xstar_def}.  For every $t\in [0,\TT]$, the law of $\XX_{t}$ is $(\Pheat_{\TT-t}\den)\pos_t$.
\end{lemma}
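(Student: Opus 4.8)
The plan is to identify the law of $\XX_t$ by testing it against bounded functions, and to carry out that test by means of a well-chosen time-dependent martingale built from the Poisson semigroup. The case $t=0$ is immediate: $\XX_0=0$ almost surely, while $(\Pheat_{\TT}\den)(0)=\sum_n\den(n)\pos_{\TT}(n)=1$, so $(\Pheat_{\TT}\den)\pos_0=\delta_0$. Fix then $t\in(0,\TT]$ and a bounded $g:\N\to\R$; the goal is to show that $\EE[g(\XX_t)]=\sum_{n}g(n)(\Pheat_{\TT-t}\den)(n)\pos_t(n)$, which for $g=\1_{\{n\}}$ gives exactly the stated marginal.

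First I would recall that $\XX=(\XX_s)_{s\in[0,\TT]}$ is a counting process with stochastic intensity $\ins=\ins^*$, so its compensator is $\int_0^{\cdot}\ins_s\diff s$, and that at state $k$ one has $\ins_s=\Pheat_{\TT-s}\den(k+1)/\Pheat_{\TT-s}\den(k)=e^{\der\Den(s,k)}\le\ubd$. Dynkin's formula then gives, for any $\varphi(s,k)$ that is $C^1$ in $s$,
\[
\EE[\varphi(t,\XX_t)]=\varphi(0,0)+\EE\!\left[\int_0^t\Big(\partial_s\varphi(s,\XX_s)+\ins_s\,\big(\varphi(s,\XX_s+1)-\varphi(s,\XX_s)\big)\Big)\diff s\right],
\]
where I used $\XX_0=0$.

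Next I would introduce the test function
\[
\varphi(s,k):=\frac{\Pheat_{t-s}\big(g\cdot\Pheat_{\TT-t}\den\big)(k)}{\Pheat_{\TT-s}\den(k)},\qquad s\in[0,t],\ k\in\N .
\]
By the semigroup property $\Pheat_{\TT-s}\den=\Pheat_{t-s}\big(\Pheat_{\TT-t}\den\big)$, so $\varphi(s,k)$ is a convex combination of the values $\{g(k+n)\}_{n\ge 0}$ (the weights being $(\Pheat_{\TT-t}\den)(k+n)\pos_{t-s}(n)\ge 0$); in particular $|\varphi|\le\|g\|_\infty$, so $(\varphi(s,\XX_s))_{s\in[0,t]}$ is bounded. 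The key computation is that its drift vanishes: writing $G:=\Pheat_{t-s}(g\,\Pheat_{\TT-t}\den)$ and $H:=\Pheat_{\TT-s}\den$ as functions of $k$, and applying $\partial_s(\Pheat_{t-s}h)=-\der(\Pheat_{t-s}h)$ to numerator and denominator together with the quotient rule, a short algebraic manipulation yields
\[
\partial_s\varphi(s,k)+\frac{H(k+1)}{H(k)}\big(\varphi(s,k+1)-\varphi(s,k)\big)=0 ,
\]
which is exactly $\partial_s\varphi(s,\XX_s)+\ins_s\big(\varphi(s,\XX_s+1)-\varphi(s,\XX_s)\big)=0$. Hence $(\varphi(s,\XX_s))_{s\in[0,t]}$ is a bounded (true) martingale, so $\EE[\varphi(t,\XX_t)]=\varphi(0,0)$. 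Finally $\varphi(t,k)=\Pheat_0(g\,\Pheat_{\TT-t}\den)(k)/\Pheat_{\TT-t}\den(k)=g(k)$, while $\varphi(0,0)=\Pheat_t(g\,\Pheat_{\TT-t}\den)(0)/\Pheat_{\TT}\den(0)=\sum_n g(n)(\Pheat_{\TT-t}\den)(n)\pos_t(n)$, using $\Pheat_{\TT}\den(0)=1$ once more. This is the claim.

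The main obstacle is really only to guess the correct time-dependent martingale $\varphi$; once it is in hand, the verification that its drift vanishes is routine (if slightly fiddly) algebra. One must also be a little careful to justify the Dynkin formula for the counting process with stochastic intensity and to confirm that $(\varphi(s,\XX_s))_s$ is a genuine martingale rather than a local one, but the convex-combination bound $|\varphi|\le\|g\|_\infty$ settles the integrability. An alternative route would be to check that both the law of $\XX_t$ and $(\Pheat_{\TT-t}\den)\pos_t$ solve the same linear, bounded-rate forward Kolmogorov equation with initial datum $\delta_0$ and to invoke uniqueness, but the martingale argument above is more self-contained.
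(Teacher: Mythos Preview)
Your proof is correct but follows a genuinely different route from the paper's. The paper proceeds via the forward Kolmogorov equation: it applies the identity $\EE[h(\XX_t)]=h(0)+\EE\big[\int_0^t\der h(\XX_s)\ins_s\diff s\big]$ with $h=\1_{\{\kk\}}$, differentiates in $t$, and derives the discrete Fokker--Planck equation \eqref{eq:FK_comb} for the law $\prob_t$ of $\XX_t$; it then checks that $(\Pheat_{\TT-t}\den)\pos_t$ is the unique solution with initial datum $\delta_0$. You instead run a backward (Feynman--Kac/Doob $h$-transform) argument: you write down the time-dependent test function $\varphi(s,k)=\Pheat_{t-s}(g\,\Pheat_{\TT-t}\den)(k)/\Pheat_{\TT-s}\den(k)$, verify by direct algebra that its drift vanishes, and read off the marginal from the endpoint values. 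Your approach has the advantage of being fully self-contained --- the uniqueness step for the forward equation, which the paper only asserts (``one can check''), is bypassed entirely --- at the cost of having to guess the correct $\varphi$. The paper's route, on the other hand, makes the Fokker--Planck equation explicit, which is of some independent interest. Amusingly, you mention the paper's approach in your final paragraph as the alternative you chose not to pursue.
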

\begin{proof}
Let $h:\N\to \R$ be any bounded function and fix $\omega\in \pstate$. By construction, $[0,\TT]\ni 
t\mapsto h(\XX_{t}(\omega))$ is $\pmeasure$-a.s. piecewise constant with jumps of size 1 at $t_1<t_2<\cdots$, where $\omega=\sum_i\delta_{(t_i,z_i)}$. Hence, for each $t\in [0,\TT]$,
\begin{align}
\label{pos_ito}
 h(\XX_{t}(\omega))=h(0)+\int_0^t \der h(\XX_{s}(\omega))\diff \XX_{s}:=h(0)+\sum_{t_i\le t}\der h(\XX_{t_i}(\omega)).
\end{align}
Taking expectation in \eqref{pos_ito}, and applying \cite[Lemma 4.1]{Klartag_Lehec}, we get
\begin{align}
\label{eq:ito_iso}
\EE[ h(\XX_{t})]=h(0)+\EE\left[\int_0^t \der h(\XX_{s})\ins_s\diff s\right].
\end{align}
Let $\prob_t$ be the law of $\XX_{t}$. Differentiating \eqref{eq:ito_iso} in $t$, and using \eqref{eq:lambda_opt}, we can apply summation by parts to get
\begin{align}
\label{eq:ibp}
\sum_{j=0}^{\infty}h(j)\partial_t\prob_t(j)=\sum_{j=0}^{\infty}\der h(j) e^{\der \Den(t,j)}\prob_t(j)=-h(0)e^{\der \Den(t,0)}\prob_t(0)-\sum_{j=0}^{\infty}h(j+1)\der\left[e^{\der \Den(t,j)}\prob_t(j)\right].
\end{align}
Equation \eqref{eq:ibp} holds for all bounded $h$, so fix a non-zero $\kk\in \N$ and let $h(j)=1_{j=\kk}$ to get the discrete Fokker-Planck equation,
\begin{equation}
\label{eq:FK}
\partial_t\prob_t(\kk)=-\der\left[e^{\der \Den(t,\kk-1)}\prob_t(\kk-1)\right]\quad\forall~ k\in\Z_+. 
\end{equation}
To get an equation at $\kk=0$, take $h(j)=1_{j=0}$ and use \eqref{eq:ibp} to deduce
\begin{equation}
\label{eq:FK0}
\partial_t\prob_t(0)=-e^{\der \Den(t,0)}\prob_t(0). 
\end{equation}
Using the convention $e^{\der \Den(t,-1)}=\prob_t(-1)=0$, we can combine \eqref{eq:FK} and \eqref{eq:FK0} to get
\begin{equation}
\label{eq:FK_comb}
\partial_t\prob_t(\kk)=-\der\left[e^{\der \Den(t,\kk-1)}\prob_t(\kk-1)\right]\quad\forall~ k\in\N. 
\end{equation}

One can check that \eqref{eq:FK_comb} is uniquely solved by
\begin{equation}
\label{eq:mu_t}
\prob_t(k)=\Pheat_{\TT-t}\den(k)\pos_t(k) \quad\forall~ k\in\N.
\end{equation}
\end{proof}
An immediate corollary of Lemma \ref{lem:X_T}  is that $\XX_{\TT}$ is distributed as $\prob$. We call the  map $\XX_{\TT}:\pstate\to \N$ the \textbf{Poisson transport map} as it transports $\pmeasure$ to $\prob$. 

\subsection{Properties of the Poisson transport map and ultra-log-concave measures} 

Let us prove a number of properties of the processes $\XX,\ins$, which we will use later.

\begin{lemma}
\label{lem:lambda_martingale}
Let $\XX$ be the process defined by \eqref{eq:Xstar_def}. Then $\ins$ is a  $\pmeasure$-martingale, i.e., the process
\[
[0,\TT]\ni t\mapsto \frac{\Pheat_{\TT-t}\den(\XX_t+1)}{ \Pheat_{\TT-t}\den(\XX_t)}
\]
is a $\pmeasure$-martingale. Further, the common mean of  $\ins$ is $\Pheat_{\TT}\den(1)$. 
\end{lemma}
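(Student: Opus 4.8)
The plan is to identify the process $t \mapsto \ins_t = e^{\der \Den(t, \XX_t)}$ with a ratio that is manifestly a martingale, namely $\Pheat_{\TT-t}\den(\XX_t + 1) / \Pheat_{\TT-t}\den(\XX_t)$, and to exploit the fact that, by Lemma \ref{lem:X_T}, the time marginal of $\XX_t$ is $(\Pheat_{\TT-t}\den)\pos_t$. The natural approach is to write down the infinitesimal generator of the (time-inhomogeneous) Markov process $(\XX_t)_{t\in[0,\TT]}$: since $\XX$ is a counting process with stochastic intensity $\ins_t = e^{\der\Den(t,\XX_t)}$, for a test function $(t,k) \mapsto \varphi(t,k)$ Dynkin's formula gives
\[
\EE[\varphi(t,\XX_t)] = \varphi(0,0) + \EE\!\left[\int_0^t \Big(\partial_s \varphi(s,\XX_s) + e^{\der \Den(s,\XX_s)}\,\der\varphi(s,\XX_s)\Big)\diff s\right],
\]
which is exactly \eqref{eq:ito_iso} with $\varphi$ in place of $h$ (the time-dependent version follows the same Klartag--Lehec identity \cite[Lemma 4.1]{Klartag_Lehec} used in the proof of Lemma \ref{lem:X_T}). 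So it suffices to check that the specific choice $\varphi(t,k) := \Pheat_{\TT-t}\den(k+1)/\Pheat_{\TT-t}\den(k) = e^{\der\Den(t,k)}$ kills the drift, i.e.\ $\partial_t \varphi(t,k) + e^{\der\Den(t,k)}\,\der\varphi(t,k) = 0$ for all $k$.

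The key computation is therefore purely a PDE identity for $\Den$. Using \eqref{eq:F_def}, namely $\partial_t \Den(t,k) = -e^{\der\Den(t,k)} + 1$, one computes
\[
\partial_t\big(\der \Den(t,k)\big) = \partial_t \Den(t,k+1) - \partial_t \Den(t,k) = -e^{\der\Den(t,k+1)} + e^{\der\Den(t,k)} = -\der\big(e^{\der\Den(t,k)}\big),
\]
and hence, writing $\varphi(t,k) = e^{\der\Den(t,k)}$ and differentiating via the chain rule,
\[
\partial_t \varphi(t,k) = e^{\der\Den(t,k)}\,\partial_t\big(\der\Den(t,k)\big) = -e^{\der\Den(t,k)}\,\der\big(e^{\der\Den(t,k)}\big) = -e^{\der\Den(t,k)}\,\der\varphi(t,k),
\]
which is precisely the vanishing-drift condition. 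Feeding this back into the Dynkin identity shows that $t \mapsto \ins_t = \varphi(t,\XX_t)$ has zero drift; since it is also bounded by $\ubd$ (by \eqref{eq:lambda_M}) and adapted (hence the integrand is genuinely predictable and integrable), it is a true $\pmeasure$-martingale, not merely a local one. Finally, the common mean equals $\EE[\ins_0] = \varphi(0,0) = \Pheat_{\TT}\den(1)/\Pheat_{\TT}\den(0)$; since $\den(0) = \prob(0)/\pos_{\TT}(0) = \prob(0)e^{\TT}$ and, more simply, $\Den(0,0) = \log \Pheat_{\TT}\den(0) = \log \EE_{\pos_\TT}[\den] = \log 1 = 0$ because $\prob = \den\pos_\TT$ is a probability measure, we get $\Pheat_\TT\den(0) = 1$ and thus the mean is $\Pheat_\TT\den(1)$, as claimed.

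The only genuine subtlety is the justification of the time-dependent Dynkin/It\^o identity above and the passage from "zero drift" to "martingale": one must make sure the Klartag--Lehec compensation formula \cite[Lemma 4.1]{Klartag_Lehec} applies to the time-dependent integrand $\partial_s\varphi(s,\XX_s) + e^{\der\Den(s,\XX_s)}\der\varphi(s,\XX_s)$ and that integrability holds on $[0,\TT]$. This is where boundedness of $\den$ and the uniform bound $\ins_t^* \le \ubd$ from \eqref{eq:lambda_M} do the work: $\varphi$ and its discrete derivative are bounded on the (almost surely finite) range of $\XX$, the number of jumps on $[0,\TT]$ is a.s.\ finite with finite expectation, so the stochastic integral against $\diff\XX_s - \ins_s\diff s$ is a genuine (not local) martingale. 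Modulo these standard Poisson-calculus checks — which mirror exactly the argument already given for Lemma \ref{lem:X_T} — the proof is the one-line PDE identity above.
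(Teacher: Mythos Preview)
Your proposal is correct and follows essentially the same route as the paper: both apply the time-dependent It\^o/Dynkin formula \eqref{eq:ito_time} to $\varphi(t,k)=e^{\der\Den(t,k)}$, use the PDE identity \eqref{eq:F_def} to show the drift vanishes, and conclude via the compensated-process martingale together with the bound \eqref{eq:lambda_M}. The only cosmetic difference is that you read off the common mean at $t=0$ (using $\Pheat_{\TT}\den(0)=1$), whereas the paper evaluates it at $t=\TT$ (using $\XX_{\TT}\sim\prob$).
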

\begin{proof}
Let  $h:[0,\TT]\times \N\to\R$ be such that the function $[0,\TT]\ni t\mapsto h(t,k)$ is continuous for all $\kk\in \N$. Then the function $[0,\TT]\ni t\mapsto h(t,\XX_t)$ is piecewise absolutely-continuous function in $t$, so 
\begin{equation}
\label{eq:ito_time}
h(t,\XX_t)=h(0,0)+\int_0^t\der h(s,\XX_s)\diff \XX_s+\int_0^t\partial_s h(s,\XX_s)\diff s.
\end{equation}
Take $h(t,\kk):=\frac{\Pheat_{\TT-t}\den(\kk+1)}{ \Pheat_{\TT-t}\den(\kk)}$, and note that it satisfies the continuity condition. Then, 
 using \eqref{eq:F_def}, we get
\begin{align*}
\frac{\Pheat_{\TT-t}\den(\XX_t+1)}{ \Pheat_{\TT-t}\den(\XX_t)}-\frac{\Pheat_{\TT}\den(1)}{ \Pheat_{\TT}\den(0)}&=\int_0^t \der (e^{\der \Den\left(s,\XX_s\right)})\diff \XX_s +\int_0^t\partial_s (e^{\der \Den\left(s,\XX_s\right)})\diff s\\
&=\int_0^t \der (1- \partial_s\Den(s,\XX_s))\diff \XX_s+ \int_0^t\partial_s (e^{\der \Den\left(s,\XX_s\right)})\diff s\\
&=-\int_0^t \der  (\partial_s\Den(s,\XX_s))\diff \XX_s+ \int_0^t\partial_s (e^{\der \Den\left(s,\XX_s\right)})\diff s.
\end{align*}
On the other hand, for every $\kk\in\N$,
\[
\partial_s (e^{\der \Den\left(s,\kk\right)})=e^{\der \Den\left(s,\kk\right)}\partial_s\der\Den(s,\kk)=e^{\der \Den\left(s,\kk\right)}\der\partial_s\Den(s,\kk),
\]
so by \eqref{eq:lambda_opt},
\[
\partial_s (e^{\der \Den\left(s,\XX_s\right)})=\der  (\partial_s\Den(s,\XX_s))\, \ins_s.
\]
We conclude that 
\[
\frac{\Pheat_{\TT-t}\den(\XX_t+1)}{ \Pheat_{\TT-t}\den(\XX_t)}-\frac{\Pheat_{\TT}\den(1)}{ \Pheat_{\TT}\den(0)}=-\int_0^t\der  (\partial_s\Den(s,\XX_s))[\diff \XX_s-\ins_s\diff s].
\]
The process $\left(\XX_t-\int_0^t\ins_s\diff s\right)_{t\in [0,\TT]}$ is called the compensated process, and is a martingale. Hence, the process $\frac{\Pheat_{\TT-t}\den(\XX_t+1)}{ \Pheat_{\TT-t}\den(\XX_t)}$ is a stochastic integral with respect to a martingale, and hence a martingale  \cite[\S 4]{Klartag_Lehec}.

To compute the common mean of $\ins$ note that since $\XX_{\TT}\sim \prob$ (cf. Lemma \ref{lem:X_T}),
\begin{align*}
\EE_{\pmeasure}[\ins_{\TT}]=\EE_{\pmeasure}\left[\frac{\den(\XX_{\TT}+1)}{ \den(\XX_{\TT})}\right]=\sum_{j=0}^{\infty}\frac{\den(j+1)}{\den(j)}\prob(j)=\sum_{j=0}^{\infty}\den(j+1)\pos_{\TT}(j)=\Pheat_{\TT}\den(1).
\end{align*}
\end{proof}
The fact that $\ins$ is a  martingale allows us give a representation of the mean of $\prob$ in terms of the Poisson semigroup, as well as an upper bound.
\begin{corollary}
\label{cor:mean_mu}
\[
\EE[\prob]\overset{(1)}{=}\TT\, \Pheat_{\TT}\den(1)\overset{(2)}{\le }\int_0^{\TT}\frac{\Pheat_{\TT-t}\den(1)}{\Pheat_{\TT-t}\den(0)}\diff t\overset{(3)}{=}\TT-\log \den(0)\overset{(4)}{=}|\log \prob(0)|\overset{(5)}{\le} \frac{\prob(1)}{\prob(0)}.
\]
\end{corollary}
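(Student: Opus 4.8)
The plan is to verify the chain one link at a time, reusing three elementary facts: the monotonicity of $[0,\TT]\ni t\mapsto \Pheat_{\TT-t}\den(\kk+1)/\Pheat_{\TT-t}\den(\kk)$ from Corollary \ref{cor:mono_ratio}; the normalization $\Pheat_{\TT}\den(0)=\sum_{n}\den(n)\pos_{\TT}(n)=\prob(\N)=1$ (since $\prob=\den\pos_{\TT}$ is a probability measure); and the identity $\partial_t\Den(t,\kk)=-e^{\der\Den(t,\kk)}+1$ from \eqref{eq:F_def}. Equality $(4)$ is then immediate, since $\prob(0)=\den(0)\pos_{\TT}(0)=\den(0)e^{-\TT}\le 1$ forces $\LL=\TT-\log\den(0)$. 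For $(1)$ I would invoke the martingale structure just established: taking $h(j)=j$ (so $\der h\equiv 1$) in \eqref{eq:ito_iso} at time $\TT$ gives $\EE[\prob]=\EE[\XX_{\TT}]=\EE\!\left[\int_0^{\TT}\ins_s\diff s\right]=\int_0^{\TT}\EE[\ins_s]\diff s$, and since $\ins$ is a $\pmeasure$-martingale with common mean $\Pheat_{\TT}\den(1)$ by Lemma \ref{lem:lambda_martingale}, this equals $\TT\,\Pheat_{\TT}\den(1)$. (One could also get $(1)$ by the reindexing $\sum_{\kk}\kk\,\den(\kk)\pos_{\TT}(\kk)=\TT\sum_{j}\den(j+1)\pos_{\TT}(j)$.)

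For the remaining three links I would first prove $(3)$ and then read off $(2)$ and $(5)$ from it. Write $\Pheat_{\TT-t}\den(1)/\Pheat_{\TT-t}\den(0)=e^{\der\Den(t,0)}$, which by \eqref{eq:F_def} at $\kk=0$ equals $1-\partial_t\Den(t,0)$; integrating over $[0,\TT]$ and using $\Den(\TT,0)=\log\Pheat_0\den(0)=\log\den(0)$ together with $\Den(0,0)=\log\Pheat_{\TT}\den(0)=\log 1=0$ yields $\int_0^{\TT}\frac{\Pheat_{\TT-t}\den(1)}{\Pheat_{\TT-t}\den(0)}\diff t=\TT-\log\den(0)$, which is $(3)$. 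By Corollary \ref{cor:mono_ratio} the integrand is non-decreasing on $[0,\TT]$, hence bounded below by its value at $t=0$, namely $\Pheat_{\TT}\den(1)/\Pheat_{\TT}\den(0)=\Pheat_{\TT}\den(1)$ by the normalization, and bounded above by its value at $t=\TT$, namely $\den(1)/\den(0)=\ubd$. Integrating these two bounds gives $(2)$ and $(5)$ respectively, once one observes that $\frac{\prob(1)}{\prob(0)}=\frac{\den(1)\pos_{\TT}(1)}{\den(0)\pos_{\TT}(0)}=\TT\,\frac{\den(1)}{\den(0)}=\TT\ubd$.

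I do not expect a genuine obstacle; the proof is essentially bookkeeping with the definitions of $\prob$, $\pos_{\TT}$, and $\Den$, plus the monotonicity lemma. The one point that needs care is $(1)$: one must correctly use that the common mean of the martingale $\ins$ is $\EE[\ins_{\TT}]=\Pheat_{\TT}\den(1)$ (which itself used $\XX_{\TT}\sim\prob$ in Lemma \ref{lem:lambda_martingale}) together with the compensation identity $\EE[\XX_{\TT}]=\EE[\int_0^{\TT}\ins_s\diff s]$, i.e.\ \eqref{eq:ito_iso} with $h=\Id$. Everything else follows from the explicit formulas above.
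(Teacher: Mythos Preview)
Your proposal is correct and follows essentially the same route as the paper's proof: identity $(1)$ via \eqref{eq:ito_iso} with $h(j)=j$ and the martingale mean from Lemma~\ref{lem:lambda_martingale}; identity $(3)$ by rewriting the integrand as $1-\partial_t\Den(t,0)$ via \eqref{eq:F_def} and using $\Pheat_{\TT}\den(0)=1$; and inequalities $(2)$ and $(5)$ by bounding the monotone integrand at the endpoints via Corollary~\ref{cor:mono_ratio}. The only cosmetic difference is that you package $(2)$ and $(5)$ together as lower and upper bounds on the same monotone integrand, whereas the paper states them separately; the content is identical.
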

\begin{proof}
To prove identity (1), take $h(j)=j$ and $t=\TT$ in \eqref{eq:ito_iso} to get
\[
\EE[\XX_{\TT}]=\EE\left[\int_0^{\TT} \ins_s\diff s\right]=\TT\EE[\ins_{\TT}]=\TT\,\Pheat_{\TT}\den(1),
\]
where we used Lemma \ref{lem:lambda_martingale}. For the inequality (2), note that $\Pheat_{\TT}f(0)=1$, since $\prob=\den\pos_{\TT}$ is a probability measure, and use Corollary \ref{cor:mono_ratio} to get $\frac{\Pheat_{\TT}\den(1)}{\Pheat_{\TT}f(0)}\le \frac{\Pheat_{\TT-t}\den(1)}{\Pheat_{\TT-t}f(0)}$ for all $t\in [0,\TT]$. For the identity (3), use  \eqref{eq:F_def} to compute
\begin{align*}
\int_0^{\TT}\frac{\Pheat_{\TT-t}\den(1)}{\Pheat_{\TT-t}f(0)}\diff t=\int_0^{\TT} e^{\der \Den(t,0)}\diff t=\int_0^{\TT} [1-\partial_t\Den(t,0)]\diff t=\TT-[\Den (\TT,0)-\Den (0,0)].
\end{align*}
The result follows since $\Den (\TT,0)=\log \den (0)$, and $\Den (0,0)=\log \Pheat_{\TT}\den(0)=0$ (because $\Pheat_{\TT}f(0)=1$ as $\prob=\den\pos_{\TT}$ is a probability measure). The identity (4) follows from $\prob=\den\pos_{\TT}$. Finally, the inequality (5) holds since, by  Corollary \ref{cor:mono_ratio}, $\frac{\Pheat_{\TT-t}\den(1)}{\Pheat_{\TT-t}f(0)}\le \frac{\den(1)}{\den(0)}$ so, by (3)-(4),
\[
|\log \prob(0)|=\int_0^{\TT}\frac{\Pheat_{\TT-t}\den(1)}{\Pheat_{\TT-t}\den(0)}\diff t\le \TT \frac{\den(1)}{\den(0)}=\frac{\prob(1)}{\prob(0)}.
\]
\end{proof}

\section{Contraction of the Poisson transport map}
\label{sec:contraction}
The main result of this section is that the Poisson transport map is a contraction when the target measures are ultra-log-concave. This result will follow from the following more general theorem, showing that the Malliavin derivative of $\XX$ is binary and nonnegative.

\begin{theorem}
\label{thm:contraction}
Fix a real number $\TT>0$ and let $\prob=\den\pos_{\TT}$ be an ultra-log-concave probability measure over $\N$. Let $\XX$ be the process defined by \eqref{eq:Xstar_def}.   Then, $\pmeasure$-almost-surely, for every $s\in [0,\TT]$,
\begin{equation}
\label{eq:contraction}
\derm_{(t,z)}\XX_s\in \{0,1\}\quad\forall~ (t,z)\in \state. 
\end{equation}
\end{theorem}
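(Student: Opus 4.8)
The plan is to run a pathwise coupling argument, comparing the sweep that builds $\XX$ from $\omega$ with the one that builds it from $\omega+\delta_{(t,z)}$. Fix $(t,z)\in\state$ and work on the $\pmeasure$-full-measure event on which $\omega=\sum_i\delta_{(t_i,z_i)}$ has finitely many atoms with pairwise distinct time-coordinates. The starting observation is that the process $\XX$ of \eqref{eq:Xstar_def}, together with its intensity \eqref{eq:lambda_opt}, admits a deterministic time-ordered description given $\omega$: before the first atom one has $\XX\equiv 0$ and $\ins_s=e^{\der \Den(s,0)}$; at each atom $(t_i,z_i)$ (processed in increasing time order) one increments $\XX$ by $1$ if and only if $z_i\le\ins_{t_i}=e^{\der \Den(t_i,\XX_{t_i})}$; and between atoms $\ins_s=e^{\der \Den(s,\XX_s)}$. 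One checks this sweep solves the fixed-point equation \eqref{eq:lambda_opt}, and we will work with it. Write $\XX',\ins'$ for the corresponding objects built from $\omega':=\omega+\delta_{(t,z)}$, and set $D_s:=\XX'_s-\XX_s$.

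First I would dispose of times $s\le t$. Since $\ins_s$ and $\ins'_s$ depend only on the atoms with time-coordinate $<s$, and $\omega,\omega'$ agree there, the two sweeps make identical decisions up to time $t$; hence $\XX'_s=\XX_s$ and $\ins'_s=\ins_s$ for $s\le t$, so $\derm_{(t,z)}\XX_s=0$ there. At time $t$ the second sweep additionally encounters $(t,z)$, which it counts iff $z\le\ins'_t=\ins_t$; thus $D_{t^+}\in\{0,1\}$, and if $z>\ins_t$ then the two sweeps remain identical forever and $D\equiv0$.

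The core of the proof is to show $D_s\in\{0,1\}$ for all $s$, by induction over the finitely many atoms of $\omega$ lying after time $t$ (the only times besides $t$ at which $D$ can change). Suppose we reach such an atom $(r,y)$ with $D_r=d\in\{0,1\}$. If $d=0$, then $\XX'_r=\XX_r$, so $\ins'_r=\ins_r$ and the two sweeps count $(r,y)$ together or not at all; $D$ stays $0$. If $d=1$, then $\XX'_r=\XX_r+1$, and here log-concavity is essential: by Proposition \ref{prop:poisson_presev} $\Pheat_{\TT-r}\den$ is log-concave, so by \eqref{eq:lc_def_ulc_mono} the map $\kk\mapsto\der\Den(r,\kk)$ is non-increasing, whence
\[
\ins'_r=e^{\der \Den(r,\XX_r+1)}\le e^{\der \Den(r,\XX_r)}=\ins_r.
\]
Consequently: if $y\le\ins'_r$ both sweeps count $(r,y)$ and $D$ stays $1$; if $\ins'_r<y\le\ins_r$ only the first counts it and $D$ drops to $0$; the remaining case $y>\ins_r\ge\ins'_r$ is impossible at an atom where $D$ actually changes. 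In all cases $D_{r^+}\in\{0,1\}$, and iterating gives $D_s\in\{0,1\}$ for every $s\in[0,\TT]$, which is \eqref{eq:contraction}.

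The main obstacle is conceptual rather than computational: perturbing $\omega$ changes the entire intensity trajectory through the fixed-point equation, so a priori $D$ could vary wildly and be negative (as the excerpt notes, $1$-Lipschitzness alone would only yield $D\in\{-1,0,1\}$). Two mechanisms pin it down: adding a point can only create extra jumps of $\XX'$, never erase jumps of $\XX$, which forces $D\ge0$; and the extra jump strictly lowers $\XX'$'s intensity relative to $\XX$'s — this is precisely where preservation of log-concavity along the Poisson semigroup enters — so $\XX'$ can never overtake $\XX$ by more than the single added point, forcing $D\le1$. Turning this into a rigorous proof only requires committing to the pathwise time-ordered construction of $\XX$ from $\omega$ and verifying it legitimately realizes the solution of \eqref{eq:lambda_opt}.
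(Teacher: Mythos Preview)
Your argument is correct and is essentially the paper's own proof: both run a pathwise time-ordered induction over the atoms of $\omega$, using Proposition~\ref{prop:poisson_presev} at each step to get $\ins'_r\le\ins_r$ whenever the two processes differ by one, and then split into the same three sub-cases (your $y\le\ins'_r$, $\ins'_r<y\le\ins_r$, $y>\ins_r$ match the paper's Cases 3.1, 3.3, 3.2). One quibble: your closing intuition that ``adding a point can only create extra jumps of $\XX'$, never erase jumps of $\XX$'' is not literally true---your own middle case $\ins'_r<y\le\ins_r$ is precisely where $\XX$ jumps but $\XX'$ does not---though this does not affect the formal argument.
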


An immediate corollary of Theorem \ref{thm:contraction} is that the Poisson transport map is a contraction, thus proving Theorem \ref{thm:contraction_intro}.
\begin{corollary}
\label{cor:contraction}
Fix a real number $\TT>0$ and let $\prob=\den\pos_{\TT}$ be an ultra-log-concave probability measure over $\N$. Let $\XX_{\TT}$ be the Poisson transport map from $\pmeasure$ to $\prob$. Then, $\pmeasure$-almost-surely,
\begin{equation}
\label{eq:contraction_cor}
\derm_{(t,z)}\XX_{\TT}\in \{0,1\}\quad\forall~ (t,z)\in \state. 
\end{equation}
\end{corollary}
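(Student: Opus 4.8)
The plan is to prove the more general Theorem~\ref{thm:contraction}, from which Corollary~\ref{cor:contraction} follows by taking $s = \TT$. The strategy is to analyze how the perturbation $\omega \mapsto \omega + \delta_{(t,z)}$ propagates through the construction of the counting process $(\XX_s)_{s\in[0,\TT]}$. Fix a realization $\omega$ and a point $(t,z) \in \state$. For $s \le t$ the added point at time $t$ is irrelevant (the process $\XX_s$ only sees points with time-coordinate strictly less than $s$), so $\derm_{(t,z)}\XX_s = 0$ there and \eqref{eq:contraction} holds trivially. The content is in $s > t$. First I would observe that up to time $t$ the two processes $\XX^{\omega}$ and $\XX^{\omega+\delta_{(t,z)}}$ coincide, hence so do their stochastic intensities $\ins$ defined by \eqref{eq:lambda_opt} (which depend only on the current value of $\XX$ and on $\Den$). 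At time $t$, the perturbed process either jumps by $1$ (if $z \le \ins_t$, i.e. the new point falls below the curve) or does not (if $z > \ins_t$). In the latter case one argues the two processes remain identical for all later times as well — though this requires care since the extra point sits in $\omega$ and could still fall below the curve at a \emph{later} time; but since $\ins_s$ is determined by $\XX_s$, and the curve value $\ins_s$ is compared against the fixed height $z$, one must track this. The cleaner route is the former: condition on the event that immediately after time $t$ the two processes differ by exactly $1$, and then show this gap of $1$ is preserved for all $s \in (t,\TT]$, i.e. $\XX_s^{\omega+\delta_{(t,z)}} - \XX_s^{\omega} \in \{0,1\}$ always, and once it becomes $0$ it stays $0$.

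The heart of the argument is a coupling/monotonicity statement: if two counting processes are driven by the \emph{same} Poisson configuration and their intensities are of the feedback form $\ins_s = e^{\der \Den(s,\cdot)}$ evaluated at their current states, and if at some time they differ by $1$, then they never differ by more than $1$, and the lagging one can only catch up. The key monotonicity input is that $\kk \mapsto e^{\der \Den(s,\kk)} = \tfrac{\Pheat_{\TT-s}\den(\kk+1)}{\Pheat_{\TT-s}\den(\kk)}$ is \emph{non-increasing} in $\kk$ — this is exactly \eqref{eq:lc_def_ulc_mono} applied to the log-concave function $\Pheat_{\TT-s}\den$, whose log-concavity is guaranteed by Proposition~\ref{prop:poisson_presev}. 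Concretely: say at time $s$ we have $\XX_s^{+} = \XX_s^{\omega} + 1$ where $\XX^{+}$ denotes the perturbed process. Then $\ins_s^{+} = e^{\der\Den(s,\XX_s^{+})} \le e^{\der\Den(s,\XX_s^{\omega})} = \ins_s$, so the set of points lying below the curve of the \emph{perturbed} process is a subset of those below the original curve (at this value of $s$). Hence any Poisson point that would increment $\XX^{+}$ also increments $\XX^{\omega}$: the perturbed process cannot pull further ahead. It can, however, stay level while the original catches up (a point with height between $\ins_s^{+}$ and $\ins_s$ increments $\XX^{\omega}$ but not $\XX^{+}$), at which moment the two processes coincide — and once they coincide they are driven by identical dynamics forever after, so the gap stays $0$. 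This shows $\derm_{(t,z)}\XX_s = \XX_s^{+} - \XX_s^{\omega} \in \{0,1\}$ for all $s$.

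To make this rigorous I would set it up as an induction over the (almost surely finite) sequence of jump times of the \emph{combined} configuration $\omega + \delta_{(t,z)}$. Between consecutive jump times neither process changes value, so the intensities $\ins_s, \ins_s^{+}$ evolve continuously and the ordering $\ins_s^{+} \le \ins_s$ is maintained as long as $\XX_s^{+} = \XX_s^{\omega}+1$ (by the corollary-to-Prékopa-Leindler monotonicity above, applied at each $s$); at a jump time one checks the four cases according to whether the jumping Poisson point lies below $\ins_s^{+}$, between $\ins_s^{+}$ and $\ins_s$, or above $\ins_s$, and verifies the invariant $\XX_s^{+} - \XX_s^{\omega} \in \{0,1\}$ together with ``absorption at $0$'' is preserved. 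The main obstacle I anticipate is the bookkeeping around the strict-versus-weak inequality $s < t$ vs.\ $s \le t$ in the definition \eqref{eq:X_def} and the left-continuity of $\XX$, together with making sure the predictability/measurability of $\ins^{*}$ is not disturbed by the perturbation — i.e.\ that $\XX^{\omega+\delta_{(t,z)}}$ is still the process associated to a legitimate solution of the fixed-point equation \eqref{eq:lambda_opt} for the perturbed configuration, so that Lemma~\ref{lem:X_T} and the construction of Klartag–Lehec genuinely apply to it. Once that is pinned down, the monotonicity of $\kk \mapsto e^{\der\Den(s,\kk)}$ does all the real work.
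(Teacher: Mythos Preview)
Your proposal is correct and follows essentially the same route as the paper: an induction over the (a.s.\ finitely many) jump times of $\omega$, with the crucial input being that $\kk\mapsto \Pheat_{\TT-s}\den(\kk+1)/\Pheat_{\TT-s}\den(\kk)$ is non-increasing (Proposition~\ref{prop:poisson_presev} plus \eqref{eq:lc_def_ulc_mono}), which forces $\ins_s^{+}\le \ins_s$ whenever $\XX_s^{+}=\XX_s^{\omega}+1$ and hence keeps the gap in $\{0,1\}$ with absorption at $0$. One small clarification: in the case $z>\ins_t$, your worry that the added point ``could still fall below the curve at a later time'' is unfounded---the point $(t,z)$ is tested only against $\ins_t$ at its own time coordinate $t$, so once it is not counted at time $t$ it is never counted, and the two processes coincide thereafter (this is the paper's Case~2).
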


Let us turn to the proof of Theorem \ref{thm:contraction}.

\begin{proof}[Proof of Theorem \ref{thm:contraction}]
Fix $(t,z)\in  \state$ and $\omega\in\pstate$. Then $\pmeasure$-a.s., there exists $n\in\N$ such that $\omega=\sum_{i=1}^n\delta_{(t_i,z_i)}$ for $(t_i,z_i)\in \state$,  with $i\in [n]:=\{1,\ldots, n\}$, $0<t_1<\cdots<t_n<\TT$, and $t\neq t_i$ for all $i\in [n]$. Fix $s\in [0,\TT]$. We need to show that
\begin{equation}
\label{eq:contraction_proof}
\derm_{(t,z)}\XX_s(\omega)=\XX_s(\omega+\delta_{(t,z)})-\XX_s(\omega)\in \{0,1\}.
\end{equation}
Let us first explain the intuition why \eqref{eq:contraction_proof} holds, and then turn to its rigorous verification. There are  three cases to consider. The first two are easy, and the third one is the interesting one. 
\begin{itemize}
\item \textbf{Case 1}. $s\le t$: Then the contribution of  the atom $(t,z)$ is not captured by either $\XX_s(\omega+\delta_{(t,z)})$ or $\XX_s(\omega)$, so both processes behave identically, and hence $\derm_{(t,z)}\XX_s(\omega)=0$.
\item \textbf{Case 2}. $t<s$ and $z$ lies above the curve $\ins(\omega+\delta_{(t,z)})$: Then the atom $(t,z)$ is not counted by the process $\XX(\omega+\delta_{(t,z)})$, so the processes $\XX(\omega+\delta_{(t,z)})$ and $\XX(\omega)$ are equal, and hence $\derm_{(t,z)}\XX_s(\omega)=0$.
\item  \textbf{Case 3}. The interesting case is $t<s$ and $z$ lies below the curve $\ins(\omega+\delta_{(t,z)})$, so the processes $\XX(\omega+\delta_{(t,z)})$ and $\XX(\omega)$ can in fact differ. Our goal is show that when the two processes differ, $\XX(\omega+\delta_{(t,z)})$ is always greater than $\XX(\omega)$, but by no more than 1. The key to prove this is to use the log-concavity of $\den$. Using the explicit expression of $\ins$ \eqref{eq:lambda_opt},  we can reason about the relation between $\ins(\omega+\delta_{(t,z)})$ and $\ins(\omega)$, and hence about the relation between $\XX(\omega+\delta_{(t,z)})$ and $\XX(\omega)$. 
\end{itemize}
Let us now turn to the actual proof of the theorem. \\

\noindent \textbf{Case 1}. $s\le t$:  We will show
\begin{equation}
\label{eq:case1}
\derm_{(t,z)}\XX_s(\omega)=\XX_s(\omega+\delta_{(t,z)})-\XX_s(\omega)=0.
\end{equation}
From the definition of  $\XX(\omega+\delta_{(t,z)})$, we know that the atom $(t,z)$ is not counted by $\XX(\omega+\delta_{(t,z)})$. So to verify \eqref{eq:case1} it suffices to show that each atom $(t_i,z_i)$ is either counted by both $\XX(\omega+\delta_{(t,z)})$ and $\XX(\omega)$, or by neither. If $t_i\ge s$  for all $i\in [n]$, then \eqref{eq:case1} holds since both $\XX(\omega+\delta_{(t,z)})$ and $\XX(\omega)$ start at 0, and neither adds any atom by time $s$. 

If there exists $i\in [n]$ such that $t_i<s$, let us denote $i_{\max}:=\max\{i\in [n]:t_i<s\}$. Since the processes are left-continuous, starting at 0, $\XX_{t_1}(\omega+\delta_{(t,z)})=\XX_{t_1}(\omega)=0$. Hence, by \eqref{eq:lambda_opt},
\[
\ins_{t_1}(\omega)=\frac{ \Pheat_{\TT-t_1}\den(\XX_{t_1}(\omega)+1)}{ \Pheat_{\TT-t_1}\den(\XX_{t_1}(\omega))}=\frac{ \Pheat_{\TT-t_1}\den(\XX_{t_1}(\omega+\delta_{(t,z)})+1)}{ \Pheat_{\TT-t_1}\den(\XX_{t_1}(\omega+\delta_{(t,z)}))}=\ins_{t_1}(\omega+\delta_{(t,z)}).
\]
It follows that
\begin{equation}
\label{eq:case1_lambda_lambda+}
  z_1\le \ins_{t_1}(\omega+\delta_{(t,z)})\quad\Longleftrightarrow\quad z_1\le \ins_{t_1}(\omega).
\end{equation}
Hence, for each $r\in(t_1,t_2\wedge s]$, (if $n=1$ then for each $r\in (t_1,s]$), $\XX_{r}(\omega+\delta_{(t,z)})=\XX_{r}(\omega)$.  If $i_{\max}=1$, we are done. Otherwise, if $i_{\max}\ge 2$, we can repeat the above argument inductively for $i\in \{2,\ldots,i_{\max}\}$ to conclude that \eqref{eq:case1} holds. \\

\noindent \textbf{Case 2}. $t<s$ and $z>\ins_t(\omega+\delta_{(t,z)})$:  We will show
\begin{equation}
\label{eq:case2}
\derm_{(t,z)}\XX_s(\omega)=\XX_s(\omega+\delta_{(t,z)})-\XX_s(\omega)=0.
\end{equation}
The argument of Case 1 shows that $\XX_t(\omega+\delta_{(t,z)})=\XX_t(\omega)$. Since $z>\ins_t(\omega+\delta_{(t,z)})$, the atom $(t,z)$ is not counted by $\XX(\omega+\delta_{(t,z)})$.  It remains to analyze the atoms $(t_i,z_i)$ for $i\in [n]$. If there exist no $t_i$ such that $t<t_i<s$, then it is clear that $\XX_s(\omega+\delta_{(t,z)})=\XX_s(\omega)$, so \eqref{eq:case2} holds. Suppose then that there exist $t_i$ such that $t<t_i<s$, and let $i_{\min}:=\min\{i\in [n]:t<t_i<s\}$. Similar to Case 1, for $r\in (t,t_{i_{\min}}]$ we have $\XX_r(\omega+\delta_{(t,z)})=\XX_r(\omega)$. In particular,  $\XX_{t_{i_{\min}}}(\omega+\delta_{(t,z)})=\XX_{t_{i_{\min}}}(\omega)$ so, as in Case 1,
\begin{equation}
\label{eq:case2_lambda_lambda+}
  z_{i_{\min}}\le \ins_{t_{i_{\min}}}(\omega+\delta_{(t,z)})\quad\Longleftrightarrow\quad z_{i_{\min}}\le \ins_{t_{i_{\min}}}(\omega).
\end{equation}
Hence, for each $r\in (t_{i_{\min}},t_{i_{\min}+1}\wedge s]$, (if $i_{\min}=n$ then for $r\in (t_{i_{\min}}, s]$), $\XX_{r}(\omega+\delta_{(t,z)})=\XX_{r}(\omega)$. We may repeat the argument above inductively for all $i\in [n]$ satisfying $t<t_i<s$ to conclude that  $\XX_{s}(\omega+\delta_{(t,z)})=\XX_{s}(\omega)$, so \eqref{eq:case2} holds.\\

\noindent \textbf{Case 3}. $t<s$ and $z\le \ins_t(\omega+\delta_{(t,z)})$: In contrast to Cases 1 and 2 we will show that
\begin{equation}
\label{eq:case3}
\derm_{(t,z)}\XX_s(\omega)=\XX_s(\omega+\delta_{(t,z)})-\XX_s(\omega)\in \{0,1\}.
\end{equation}
Again, the argument of Case 1 shows that $\XX_t(\omega+\delta_{(t,z)})=\XX_t(\omega)$. In contrast to Case 2, since $z\le \ins_t(\omega+\delta_{(t,z)})$, the atom $(t,z)$ is counted by $\XX(\omega+\delta_{(t,z)})$. Let us analyze the possible values of $\XX_s(\omega+\delta_{(t,z)})$ and $\XX_s(\omega)$. If there exist no $t_i$ such that $t<t_i<s$, then 
\[
\XX_s(\omega+\delta_{(t,z)})=\XX_t(\omega+\delta_{(t,z)})+1\quad\text{and}\quad \XX_s(\omega)=\XX_t(\omega),
\]
so $\derm_{(t,z)}\XX_s(\omega)=1$, and hence \eqref{eq:case3} holds. 

Suppose then that there exists $t_i$ such that $t<t_i<s$, and as in Case 2, let $i_{\min}:=\min\{i\in [n]:t<t_i<s\}$. For $r\in (t,t_{i_{\min}}]$, we have
\[
\XX_r(\omega+\delta_{(t,z)})=\XX_t(\omega)+1\quad\text{and}\quad \XX_r(\omega)=\XX_t(\omega).
\]
In particular,
\[
\XX_{t_{i_{\min}}}(\omega+\delta_{(t,z)})=\XX_{t_{i_{\min}}}(\omega)+1,
\]
so by Proposition \ref{prop:poisson_presev}, and \eqref{eq:lc_def_ulc}, 
\begin{align}
\label{eq:lambda_lc}
\begin{split}
\ins_{t_{i_{\min}}}(\omega+\delta_{(t,z)})&=\frac{ \Pheat_{\TT-t_{i_{\min}}}\den(\XX_{t_{i_{\min}}}(\omega+\delta_{(t,z)})+1)}{ \Pheat_{\TT-t_{i_{\min}}}\den(\XX_{t_{i_{\min}}}(\omega+\delta_{(t,z)}))}=\frac{ \Pheat_{\TT-t_{i_{\min}}}\den(\XX_{t_{i_{\min}}}(\omega)+2)}{ \Pheat_{\TT-t_{i_{\min}}}\den(\XX_{t_{i_{\min}}}(\omega)+1)}\\
&\le \frac{ \Pheat_{\TT-t_{i_{\min}}}\den(\XX_{t_{i_{\min}}}(\omega)+1)}{ \Pheat_{\TT-t_{i_{\min}}}\den(\XX_{t_{i_{\min}}}(\omega))}=\ins_{t_{i_{\min}}}(\omega).
\end{split}
\end{align}
Let us record then the one-direction analogue of \eqref{eq:case2_lambda_lambda+},
\begin{equation}
\label{eq:case3_lambda_lambda+}
z_{i_{\min}}\le\ins_{t_{i_{\min}}}(\omega+\delta_{(t,z)})\quad\Longrightarrow \quad z_{i_{\min}}\le\ins_{t_{i_{\min}}}(\omega).
\end{equation}

We now have a three sub-cases to consider. \\

\noindent \textbf{Case 3.1}. $z_{i_{\min}}\le \ins_{t_{i_{\min}}}(\omega+\delta_{(t,z)})$: Applying \eqref{eq:case3_lambda_lambda+} we can deduce that for each $r\in(t_{i_{\min}},t_{i_{\min}+1}\wedge s]$, (if $i_{\min}=n$ then for each  $r\in(t_{i_{\min}}, s]$), $\derm_{(t,z)}\XX_r(\omega)=1$, since both $\XX(\omega)$ and $\XX(\omega+\delta_{(t,z)})$ count $(t_{i_{\min}},z_{i_{\min}})$.\\

\noindent \textbf{Case 3.2}. $z_{i_{\min}}>\ins_{t_{i_{\min}}}(\omega+\delta_{(t,z)})$ and  $z_{i_{\min}}>\ins_{t_{i_{\min}}}(\omega)$: For each $r\in(t_{i_{\min}},t_{i_{\min}+1}\wedge s]$, (if $i_{\min}=n$ then for each  $r\in(t_{i_{\min}}, s]$), we have $\derm_{(t,z)}\XX_r(\omega)=1$, since both $\XX(\omega)$ and $\XX(\omega+\delta_{(t,z)})$  did not count $(t_{i_{\min}},z_{i_{\min}})$.\\

\noindent \textbf{Case 3.3}. $z_{i_{\min}}>\ins_{t_{i_{\min}}}(\omega+\delta_{(t,z)})$ and  $z_{i_{\min}}\le \ins_{t_{i_{\min}}}(\omega)$: For each $r\in(t_{i_{\min}},t_{i_{\min}+1}\wedge s]$, (if $i_{\min}=n$ then for each  $r\in(t_{i_{\min}}, s]$), we have $\derm_{(t,z)}\XX_r(\omega)=0$, since  $\XX(\omega)$ counted $(t_{i_{\min}},z_{i_{\min}})$, but $\XX(\omega+\delta_{(t,z)})$  did not. 

If there exists no $t_i$, for $i>i_{\min}$, such that $t<t_i<s$, then Cases 3.1-3.3 verify \eqref{eq:case3}. Suppose then that there exist  $i>i_{\min}$ such that $t<t_i<s$. We will proceed inductively. From Cases 3.1-3.3 we have that $\derm_{(t,z)}\XX_{t_{i_{\min}+1}}(\omega)\in\{0,1\}$. If $\derm_{(t,z)}\XX_{t_{i_{\min}+1}}(\omega)=1$, then arguing as in \eqref{eq:lambda_lc}, we have
\begin{equation}
\label{eq:lambda_lc+1}
\ins_{t_{i_{\min}+1}}(\omega+\delta_{(t,z)})\le \ins_{t_{i_{\min}+1}}(\omega).
\end{equation}
We now repeat Cases 3.1-3.3, replacing $i_{\min}$ by $i_{\min}+1$.  If $\derm_{(t,z)}\XX_{t_{i_{\min}+1}}(\omega)=0$, then $\ins_{t_{i_{\min}+1}}(\omega+\delta_{(t,z)})= \ins_{t_{i_{\min}+1}}(\omega)$, so \eqref{eq:lambda_lc+1} holds, and again  we repeat Cases 3.1-3.3, replacing $i_{\min}$ by $i_{\min}+1$. Continuing in this manner we deduce \eqref{eq:case3}.
\end{proof}

The proof of Theorem \ref{thm:contraction} yields the following necessary condition for the Malliavin derivative being 1.
\begin{corollary}
\label{cor:Malliavin_necessary}
Fix $(t,z)\in  \state$ and $\omega\in\pstate$. Then $\pmeasure$-a.s., given $s\in [0,\TT]$, if $\derm_{(t,z)}\XX_s(\omega)=1$, we must have $t<s$ and $z\le \ins_t(\omega+\delta_{(t,z)})= \ins_t(\omega)$.
\end{corollary}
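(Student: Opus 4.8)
The plan is to extract the statement directly from the case analysis carried out in the proof of Theorem~\ref{thm:contraction}. Fix $(t,z)\in\state$ and work on the full-measure event on which $\omega=\sum_{i=1}^n\delta_{(t_i,z_i)}$ with the $t_i$ distinct and $t\neq t_i$ for all $i$ --- this is precisely the event used throughout that proof. There, the value of $\derm_{(t,z)}\XX_s(\omega)$ was determined according to three mutually exclusive and exhaustive cases. In Case~1 ($s\le t$) and in Case~2 ($t<s$ and $z>\ins_t(\omega+\delta_{(t,z)})$) it was shown that $\derm_{(t,z)}\XX_s(\omega)=0$. Hence, if $\derm_{(t,z)}\XX_s(\omega)=1$, we are necessarily in Case~3, which is exactly the assertion that $t<s$ and $z\le\ins_t(\omega+\delta_{(t,z)})$.

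It remains to upgrade $z\le\ins_t(\omega+\delta_{(t,z)})$ to $z\le\ins_t(\omega+\delta_{(t,z)})=\ins_t(\omega)$. For this I would invoke the observation already used at the start of Case~3 (itself borrowed from Case~1): since $\XX_t$ counts only atoms located strictly before time $t$, and $t\neq t_i$ for every $i$, adding the atom $\delta_{(t,z)}$ leaves $\XX_t$ unchanged, i.e.\ $\XX_t(\omega+\delta_{(t,z)})=\XX_t(\omega)$. Feeding this equality into the explicit formula \eqref{eq:lambda_opt}, which expresses $\ins_t$ as the deterministic function $e^{\der\Den(t,\cdot)}$ of $\XX_t$, yields $\ins_t(\omega+\delta_{(t,z)})=\ins_t(\omega)$, and the proof is complete.

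There is no real obstacle here: the corollary is a bookkeeping consequence of the proof of Theorem~\ref{thm:contraction}, and the only point demanding a little attention is consistency of the almost-sure qualifier, namely restricting to the same full-measure event (finitely many distinct atoms, none at time exactly $t$) on which that proof operates, together with the use of the strict inequality $s<t$ in the definition \eqref{eq:X_def} of $\XX_t$.
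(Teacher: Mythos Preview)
Your proposal is correct and follows exactly the same approach as the paper: you deduce $t<s$ and $z\le\ins_t(\omega+\delta_{(t,z)})$ from the fact that Cases~1--2 of Theorem~\ref{thm:contraction} yield $\derm_{(t,z)}\XX_s(\omega)=0$, and then obtain $\ins_t(\omega+\delta_{(t,z)})=\ins_t(\omega)$ from $\XX_t(\omega+\delta_{(t,z)})=\XX_t(\omega)$ via \eqref{eq:lambda_opt}. The paper's proof is precisely this two-step argument, stated in two sentences.
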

\begin{proof}
The conditions $t<s$ and $z\le \ins_t(\omega+\delta_{(t,z)})$ hold because the proof of Theorem \ref{thm:contraction} showed that $\derm_{(t,z)}\XX_s(\omega)=0$ in Cases 1-2. The condition $\ins_t(\omega+\delta_{(t,z)})= \ins_t(\omega)$ holds since in Case 3 we have shown $\XX_t(\omega+\delta_{(t,z)})=\XX_t(\omega)$, so the result follows from \eqref{eq:lambda_opt}. 
\end{proof}

\subsection{The Brownian transport map vs. the Poisson transport map}
\label{subsec:BrownianVsPoisson}
Let us elaborate on the similarities and dissimilarities between the Brownian transport map \cite{mikulincer2021brownian} and the Poisson transport map. For  simplicity, let us take $\TT=1$. We begin with a  sketch of the Brownian transport map. Let $\prob$ be a probability measure on $\R^d$ of the form $\prob=\den\gamma_d$. Denote by $(\PheatE_t)_{t\in [0,1]}$ the heat semigroup on $\R^d$, and consider the stochastic differential equation,
\begin{equation}
\label{eq:Follmer_SDE}
\diff \YY_t=\nabla\log \PheatE_{1-t}\den(\YY_t)\diff t+\diff B_t,\quad \YY_0=0,
\end{equation}
where $(B_t)_{t\in [0,1]}$ is a standard Brownian motion in $\R^d$. The process $\YY:=(\YY_t)_{t\in [0,1]}$ is known as the \emph{F\"ollmer process} \cite{follmer2005entropy, follmer2006time, lehec2013representation}, and can be seen as Brownian motion conditioned to be distributed like $\prob$ at time 1. Alternatively, the process $\YY$ is a solution to an entropy minimization problem  over the Wiener space. In \cite{mikulincer2021brownian}, $\YY_1$ is called the \textbf{Brownian transport map}, as it transports the Wiener measure (the law of $(B_t)_{t\in [0,1]}$) onto $\prob$. 

Now suppose that $\prob=\den\gamma_d$ is such that $\den:\R^d\to \R_{\ge 0}$ is log-concave. It was shown in \cite[Theorem 1.1]{mikulincer2021brownian} that, in such setting, the Brownian transport map $\YY_1$ is 1-Lipschitz, in the sense that the Malliavin derivative $\derm$ of $\YY_1$ is bounded in absolute value by 1. The proof of this result proceeds by differentiating  \eqref{eq:Follmer_SDE} (with $\derm$) to get \cite[Proposition 3.10]{mikulincer2021brownian},
\begin{equation}
\label{eq:Follmer_SDE_Malliavin}
\partial_s\derm\YY_s=\nabla^2\log \PheatE_{1-s}\den(\YY_s)\derm\YY_s.
\end{equation}
Hence, to show that $\YY_1$ is 1-Lipschitz, i.e., $|\derm\YY_1|\le 1$, it suffices to control $\nabla^2\log \PheatE_{1-s}\den(\YY_s)$, and then use Gr\"onwall's inequality. In particular, when $\den$ is log-concave, $\PheatE_{1-s}\den$ is also log-concave (consequence of the   Pr\'ekopa-Leindler inequality), i.e.,
\begin{equation}
\label{eq:heat_lc}
\nabla^2\log \PheatE_{1-s}\den(\YY_s)\le 0,
\end{equation}
so \eqref{eq:Follmer_SDE_Malliavin} and Gr\"onwall's inequality yield  $|\derm\YY_1|\le 1$.

The analogue in the discrete setting of the F\"ollmer process \eqref{eq:Follmer_SDE} is the process $\XX$ defined in \eqref{eq:Xstar_def}. Indeed, it was shown in \cite{Klartag_Lehec} that the process $\XX$ is the solution to the corresponding entropy minimization problem on the Poisson  space. Unlike the continuous setting, here we do not have an analogue of \eqref{eq:Follmer_SDE_Malliavin}, but we do have an analogue of \eqref{eq:heat_lc}. The process $\ins_s= \frac{\Pheat_{1-s}\den(\XX_s+1)}{ \Pheat_{1-s}\den(\XX_s)}$ plays the role of $\nabla\log \PheatE_{1-s}\den(\YY_s)$, and the next result is the analogue of \eqref{eq:heat_lc}.

\begin{lemma}
\label{lem:heat_lc_Poison}
For every $s\in [0,1]$, $\pmeasure$-almost-surely,
\[
\derm_{(t,z)}\ins_s\le 0\quad\forall~ (t,z)\in \state.
\]
\end{lemma}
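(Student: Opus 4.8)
The plan is to show that $\derm_{(t,z)}\ins_s \le 0$ by unwinding the definition of the Malliavin derivative and the explicit formula \eqref{eq:lambda_opt} for $\ins_s$, and then invoking the nonnegativity of $\derm_{(t,z)}\XX_s$ from Theorem \ref{thm:contraction} together with the log-concavity of $\den$ (preserved by the Poisson semigroup, Proposition \ref{prop:poisson_presev}). Fix $(t,z)\in\state$, $\omega\in\pstate$, and $s\in[0,1]$. By Corollary \ref{cor:Malliavin_necessary}, if $\derm_{(t,z)}\XX_t(\omega)$ is anything other than $0$ then we are in a situation where things are easy to track; in fact the proof of Theorem \ref{thm:contraction} shows $\XX_t(\omega+\delta_{(t,z)})=\XX_t(\omega)$ always, and more generally $\derm_{(t,z)}\XX_r(\omega)\in\{0,1\}$ for every $r$.

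\medskip

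\noindent First I would write, using \eqref{eq:lambda_opt},
\[
\derm_{(t,z)}\ins_s(\omega)=\ins_s(\omega+\delta_{(t,z)})-\ins_s(\omega)=\frac{\Pheat_{1-s}\den(\XX_s(\omega+\delta_{(t,z)})+1)}{\Pheat_{1-s}\den(\XX_s(\omega+\delta_{(t,z)}))}-\frac{\Pheat_{1-s}\den(\XX_s(\omega)+1)}{\Pheat_{1-s}\den(\XX_s(\omega))}.
\]
By Theorem \ref{thm:contraction}, $\XX_s(\omega+\delta_{(t,z)})\in\{\XX_s(\omega),\XX_s(\omega)+1\}$. In the first case the two terms coincide and the difference is $0$. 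In the second case, writing $\kk:=\XX_s(\omega)$, the difference equals
\[
\frac{\Pheat_{1-s}\den(\kk+2)}{\Pheat_{1-s}\den(\kk+1)}-\frac{\Pheat_{1-s}\den(\kk+1)}{\Pheat_{1-s}\den(\kk)},
\]
which is $\le 0$ precisely because $\Pheat_{1-s}\den$ is log-concave, i.e. by \eqref{eq:lc_def_ulc_mono} the ratio $m\mapsto \Pheat_{1-s}\den(m)/\Pheat_{1-s}\den(m-1)$ is non-increasing; log-concavity of $\Pheat_{1-s}\den$ is exactly Proposition \ref{prop:poisson_presev}. This handles all cases, so $\derm_{(t,z)}\ins_s\le 0$ almost surely.

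\medskip

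\noindent The only subtlety — and thus the step I would be most careful about — is the measure-zero exceptional set: one must check that $\pmeasure$-almost-surely $\omega$ is a finite sum of distinct atoms with $t\ne t_i$, exactly as in the proof of Theorem \ref{thm:contraction}, so that Theorem \ref{thm:contraction} applies pointwise to both $\XX_s(\omega)$ and $\XX_s(\omega+\delta_{(t,z)})$ simultaneously for all $(t,z)\in\state$. Granting that (it is the same null set used throughout Section \ref{sec:contraction}), the argument is a one-line consequence of the contraction theorem and log-concavity preservation; there is no real obstacle beyond carefully reducing to the binary-valued case.
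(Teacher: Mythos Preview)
Your proposal is correct and follows essentially the same argument as the paper's proof: expand $\derm_{(t,z)}\ins_s$ via \eqref{eq:lambda_opt}, invoke Theorem \ref{thm:contraction} to reduce to the two cases $\XX_s(\omega+\delta_{(t,z)})\in\{\XX_s(\omega),\XX_s(\omega)+1\}$, and conclude in the second case from the log-concavity of $\Pheat_{1-s}\den$ (Proposition \ref{prop:poisson_presev}). The opening sentence invoking Corollary \ref{cor:Malliavin_necessary} and the closing paragraph about the null set are unnecessary digressions---the paper simply applies Theorem \ref{thm:contraction} directly---but they do no harm.
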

\begin{proof}
Fix $\omega\in\pstate$. By definition,
\begin{align*}
\derm_{(t,z)}\ins_s(\omega)=\ins_s(\omega+\delta_{(t,z)})-\ins_s(\omega)=\frac{\Pheat_{1-s}\den(\XX_s(\omega+\delta_{(t,z)})+1)}{ \Pheat_{1-s}\den(\XX_s(\omega+\delta_{(t,z)}))}-\frac{\Pheat_{1-s}\den(\XX_s(\omega)+1)}{ \Pheat_{1-s}\den(\XX_s(\omega))}.
\end{align*}
By Theorem \ref{thm:contraction}, $\XX_s(\omega+\delta_{(t,z)})\in \{\XX_s(\omega),\XX_s(\omega)+1\}$. If $\XX_s(\omega+\delta_{(t,z)})=\XX_s(\omega)$, then $\derm_{(t,z)}\ins_s(\omega)=0$. If $\XX_s(\omega+\delta_{(t,z)})=\XX_s(\omega)+1$, then
\[
\derm_{(t,z)}\ins_s(\omega)=\frac{\Pheat_{1-s}\den(\XX_s(\omega)+2)}{ \Pheat_{1-s}\den(\XX_s(\omega)+1)}-\frac{\Pheat_{1-s}\den(\XX_s(\omega)+1)}{ \Pheat_{1-s}\den(\XX_s(\omega))}\le 0,
\]
where the inequality holds by Proposition  \ref{prop:poisson_presev} and \eqref{eq:lc_def_ulc}. 
\end{proof} 
Let us conclude with a few remarks on some other differences between the Brownian and Poisson transport maps.
\begin{remark}
\label{rem:BM-Poisson}
$~$

\begin{enumerate}
\item In the Brownian transport map setting, the source measure is always the Wiener measure on the Wiener space $C([0,1])$ of continuous functions $[0,1]\to \R$, independent of the target measure $\prob$.  In contrast, the transported Poisson measure $\pmeasure$ depends on $\prob$, because the space $\state=[0,\TT]\times [0,\ubd]$ depends on $\prob$ via $\ubd$. This difference is not material since the functional inequalities satisfied by $\pmeasure$ do not depend on $\ubd$. 

\item The fact that the Brownian transport map is 1-Lipschitz, when $\prob$ is more log-concave than the Gaussian, means that the functional inequalities which hold for the Gaussian also hold for $\prob$ \emph{with the same constants}. In contrast, the constants in the functional inequalities for ultra-log-concave measures $\prob=\den\pos_{\TT}$, obtained from the Poisson transport map, are different from those satisfied by $\pos_{\TT}$. This is not a deficiency of the Poisson transport map, but rather a manifestation of the discrete nature of the probability measures under consideration.
\end{enumerate}
\end{remark}

\section{Functional inequalities}
\label{sec:func_inq}
In this section we show how Corollary \ref{cor:contraction} can be used to deduce functional inequalities for ultra-log-concave measures. In particular, the results of this section verify Theorem \ref{thm:modified_LSI_ULC_into}, Theorem \ref{thm:PhiSoboleUlc_intro}, and Theorem \ref{thm:entropy-transport_ULC_intro}. The proofs of all of the results below proceed by using an appropriate  functional inequality for $\pmeasure$ (cf. Section \ref{subsec:poisson_space}), and then, using Corollary \ref{cor:contraction}, transporting these inequalities to ultra-log-concave measures. 

\subsection{$\Phi$-Sobolev inequalities} 
\label{subsec:Phi_Sobolev}
In this section we prove both Theorem \ref{thm:PhiSoboleUlc_intro} and Theorem \ref{thm:modified_LSI_ULC_into}.  

\begin{definition}
Let $\mathcal I\subseteq \R$ be a closed interval, not necessarily bounded, and let $\Phi:\mathcal I\to \R$ be a smooth convex function. Let $(E,\mathcal E,Q)$ be a probability Borel space. The $\Phi$-entropy functional $\Ent^{\Phi}_Q$ is defined on the set of $Q$-integrable functions $G:(E,\mathcal E)\to (\mathcal I,\mathcal B(\mathcal I))$, where $\mathcal B(\mathcal I)$ stands for the Borel sigma-algebra of  $\mathcal I$,   by
\begin{equation}
\label{eq:Phi_Ent_def}
\Ent^{\Phi}_Q(G):=\int_E\Phi(G)\diff Q-\Phi\left(\int_E G \diff Q\right).
\end{equation}
\end{definition} 

As shown by Chafa\"{\i}, the Poisson measure $\pmeasure$ satisfies $\Phi$-Sobolev inequalities:

\begin{theorem}{\cite[Eq. (61)]{MR2081075}.}
\label{thm:PhiSobolePoisson}
Let $\mathcal I\subseteq \R$ be a closed interval, not necessarily bounded, and let $\Phi:\mathcal I\to \R$ be a smooth convex function. Suppose that the function
\[
\{(u,v)\in \R^2:(u,u+v)\in \mathcal I\times\mathcal I\}\ni (u,v)\quad\mapsto \quad \Psi(u,v):=\Phi(u+v)-\Phi(u)-\Phi'(u)v
\]
is nonnegative and convex. Then, for any $G\in L^2(\pstate,\pmeasure)$, such that $\pmeasure$-a.s. $G,G+\derm G\in \mathcal I$,
\begin{equation}
\label{eq:PhiSobolevPoisson}
\Ent^{\Phi}_{\pmeasure}(G)\le \EE_{\pmeasure}\left[\int_{\state}\Psi(G,\derm_{(t,z)}G)\diff t\diff z\right].
\end{equation}
\end{theorem}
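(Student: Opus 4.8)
The plan is to reduce \eqref{eq:PhiSobolevPoisson} to an elementary two-point inequality, using the sub-additivity of $\Phi$-entropy under products together with the realization of the Poisson point process $\pmeasure$ as a limit of product Bernoulli measures. First record the two-point bound: for the Bernoulli law $\beta_p$ on $\{0,1\}$ with $\beta_p(\{1\})=p$ and any $g:\{0,1\}\to\mathcal I$, writing $a:=g(0)$, $b:=g(1)$, a direct computation gives $\Ent^{\Phi}_{\beta_p}(g)=(1-p)\Phi(a)+p\Phi(b)-\Phi\bigl(a+p(b-a)\bigr)$, and, after cancellation, the claimed inequality $\Ent^{\Phi}_{\beta_p}(g)\le p\,\Psi(a,b-a)$ is seen to be equivalent to the tangent-line bound $\Phi\bigl(a+p(b-a)\bigr)\ge\Phi(a)+p(b-a)\Phi'(a)$, i.e. to the convexity of $\Phi$ alone (nonnegativity of $\Psi$ is likewise just convexity of $\Phi$).

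Next, for each $n$ partition $\state=[0,\TT]\times[0,\ubd]$ into Borel cells $\state^{(n)}_1,\dots,\state^{(n)}_n$ of equal measure $\Delta_n=\leb(\state)/n$, fix representatives $x^{(n)}_i\in\state^{(n)}_i$, and let $\pmeasure^{(n)}:=\bigotimes_{i=1}^n\beta_{p_n}$ with $p_n:=1-e^{-\Delta_n}$, identified with a law on $\pstate$ through $x\mapsto\sum_{i:\,x_i=1}\delta_{x^{(n)}_i}$; the superposition of independent rare Bernoulli points yields $\pmeasure^{(n)}\to\pmeasure$ as $n\to\infty$. The hypothesis that $\Psi$ is convex is equivalent to $\Phi$ being affine or $1/\Phi''$ concave, which is exactly the condition under which $\Phi$-entropy is sub-additive over product measures; hence for any cylinder function $G_n$ of the occupations $(x_1,\dots,x_n)$,
\[
\Ent^{\Phi}_{\pmeasure^{(n)}}(G_n)\ \le\ \sum_{i=1}^n\EE_{\pmeasure^{(n)}}\!\bigl[\Ent^{\Phi}_{\beta_{p_n},\,i}(G_n)\bigr]\ \le\ p_n\sum_{i=1}^n\EE_{\pmeasure^{(n)}}\!\bigl[\Psi\bigl(G_n|_{x_i=0},\,\der_iG_n\bigr)\bigr],
\]
where $\der_iG_n:=G_n|_{x_i=1}-G_n|_{x_i=0}$ is the coordinatewise discrete derivative, which coincides with the Malliavin derivative $\derm_{x^{(n)}_i}G_n$ at the representative point: the first inequality is tensorization, the second the two-point bound applied in each coordinate.

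It remains to pass to the limit. Given $G\in L^2(\pstate,\pmeasure)$ with $G,\derm G\in\mathcal I$ $\pmeasure$-a.s., approximate it by cylinder functions $G_n$ adapted to the cells (e.g. conditional expectations given the cell occupation numbers, composed if needed with a truncation keeping the range in $\mathcal I$), so that $G_n\to G$, hence $\Ent^{\Phi}_{\pmeasure^{(n)}}(G_n)\to\Ent^{\Phi}_{\pmeasure}(G)$ by continuity of $\Phi$; and, since $p_n=\Delta_n(1+o(1))$ while $G_n|_{x_i=0}\to G(\omega)$ and $\der_iG_n\to\derm_{(t,z)}G(\omega)$ for $(t,z)$ ranging over the $i$-th cell, the right-hand side is a Riemann-type sum converging to $\EE_{\pmeasure}\!\bigl[\int_{\state}\Psi(G,\derm_{(t,z)}G)\diff t\diff z\bigr]$. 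Combining the three ingredients gives \eqref{eq:PhiSobolevPoisson}.

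The delicate point is the last step: since $\Psi$ is only assumed nonnegative and convex (so a priori carries no quantitative integrability), and since the approximants must respect the constraints $G_n,\der_iG_n\in\mathcal I$, the simultaneous convergence of the entropies and of the energies needs care. A clean way around it is to first establish \eqref{eq:PhiSobolevPoisson} for bounded cylinder functions of finitely many occupation numbers $\omega(B_1),\dots,\omega(B_m)$ — where all the limits above are elementary — and then remove the truncations by monotone/dominated convergence and conclude by density in $L^2(\pmeasure)$. An alternative, and the route of Chafa\"{\i}, is semigroup interpolation along the Ornstein--Uhlenbeck (number operator) semigroup $(P_s)_{s\ge0}$ on the Poisson space, exploiting the intertwining $\derm_{(t,z)}P_s=e^{-s}P_s\derm_{(t,z)}$ together with a $\Gamma_2$-type convexity inequality encoding the hypothesis on $\Psi$; the discretization argument has the advantage of sidestepping the regularity theory of $(P_s)$.
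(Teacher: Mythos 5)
This theorem is not proved in the paper at all: it is imported verbatim from Chafa\"{\i} \cite{MR2081075}, so there is no internal argument to compare yours against --- what you have written is a proof of a quoted input. Your route (Bernoulli two-point bound, sub-additivity of $\Phi$-entropy over products, Bernoulli-to-Poisson limit) is the classical discretization proof, and its first two steps are correct as stated: the two-point inequality does reduce to the tangent-line bound for $\Phi$, and the convexity of $\Psi$ (equivalently, $\Phi$ affine or $1/\Phi''$ concave) is exactly the standard condition guaranteeing sub-additivity of $\Ent^{\Phi}$ over product measures. For bounded cylinder functions of finitely many occupation numbers $\omega(B_1),\dots,\omega(B_m)$, the Riemann-sum limit you describe goes through, provided you also note that the identification $\der_i G_n=\derm_{x_i^{(n)}}G_n$ and the Bernoulli representation of $\pmeasure$ both break down on the event that a cell contains two or more points (or already contains one when you add one); these events have total probability $O(1/n)$ and are harmless for bounded cylinder functions, but the estimate should be said.

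The genuinely incomplete step is the extension to general $G$: ``conclude by density in $L^2(\pmeasure)$'' is not a valid closing move, because neither side of \eqref{eq:PhiSobolevPoisson} is continuous under $L^2$ convergence (the right-hand side carries no a priori integrability), and truncation need not decrease the right-hand side. The standard repair, compatible with your sketch, is to approximate by $G_k:=\EE_{\pmeasure}[G\mid \psig^k]$, where $\psig^k$ is generated by the occupation numbers of a refining sequence of partitions of $\state$. Conditionally on $\psig^k$ the points are uniform in their cells, so for $(t,z)$ in a cell $B$ one has $\derm_{(t,z)}G_k=\EE_{\pmeasure}\bigl[\leb(B)^{-1}\int_B \derm_{(s,w)}G\,\diff s\,\diff w\mid \psig^k\bigr]$; two applications of Jensen's inequality (using precisely the joint convexity of $\Psi$) then show that the right-hand side of \eqref{eq:PhiSobolevPoisson} only decreases along this approximation, while martingale convergence and Fatou (after subtracting a tangent line of $\Phi$) handle the left-hand side, and the constraints $G_k,\,G_k+\derm G_k\in\mathcal I$ are automatically preserved because $\mathcal I$ is a closed interval and these are averages of quantities in $\mathcal I$. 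With that replacement your argument becomes a complete, self-contained alternative to citing \cite{MR2081075}; the semigroup route you mention at the end is a genuinely different mechanism and is not needed here.
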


Let us now transport the inequality \eqref{eq:PhiSobolevPoisson} to ultra-log-concave measures, using the Poisson transport map, thus proving Theorem \ref{thm:PhiSoboleUlc_intro}.

\begin{theorem}
\label{thm:PhiSoboleUlc}
Let $\prob$ be an ultra-log-concave probability measure over $\N$. Let $\mathcal I\subseteq \R$ be a closed interval, not necessarily bounded, and let $\Phi:\mathcal I\to \R$ be a smooth convex function. Suppose that the function
\[
\{(u,v)\in \R^2:(u,u+v)\in \mathcal I\times\mathcal I\}\ni (u,v)\quad\mapsto \quad \Psi(u,v):=\Phi(u+v)-\Phi(u)-\Phi'(u)v
\]
is nonnegative and convex. Then, for any $g\in L^2(\N,\prob)$, such that $\prob$-a.s. $g,g+\der g\in\mathcal I$,
\begin{equation}
\label{eq:PhiSobolevUlc}
\Ent^{\Phi}_{\prob}(g)\le \LL\,\EE_{\prob}[\Psi(g,\der g)].
\end{equation}
\end{theorem}

\begin{proof}
Define $G\in L^2(\pstate,\pmeasure)$ by $G(\omega):=g(\XX_T(\omega))$, and apply \eqref{eq:PhiSobolevPoisson} to get
\begin{align}
\label{eq:Poisson_Phi_Sob}
\begin{split}
\Ent^{\Phi}_{\prob}(g)=\Ent^{\Phi}_{\pmeasure}(G)&\le \EE_{\pmeasure}\left[\int_{\state}\Psi(G,\derm_{(t,z)}G)\diff t\diff z\right]\\
&=\EE_{\pmeasure}\left[\int_{\state}\Psi(g\circ \XX_T,(\der g\circ \XX_{\TT})\cdot \derm_{(t,z)}\XX_{\TT})\diff t\diff z\right],
\end{split}
\end{align}
where the last equality holds by Corollary \ref{cor:contraction} and Lemma \ref{lem:chain_rule}.  Since $\derm_{(t,z)}\XX_{\TT}\in \{0,1\}$ by Corollary \ref{cor:contraction}, we have that $\pmeasure$-a.s.,
\begin{equation}
\label{eq:Psi_indicator}
\Psi(g\circ \XX_T,(\der g\circ \XX_{\TT})\cdot \derm_{(t,z)}\XX_{\TT})=\Psi(g\circ \XX_T,(\der g\circ \XX_{\TT}))1_{\{\derm_{(t,z)}\XX_{\TT}=1\}}.
\end{equation}
On the other hand, by Corollary \ref{cor:Malliavin_necessary}, we have, $\pmeasure$-a.s., $1_{\{\derm_{(t,z)}\XX_{\TT}=1\}}\le 1_{\{z\le \ins_t\}}$. Since $\Psi$ is nonnegative, we conclude from \eqref{eq:Psi_indicator} that 
\begin{equation}
\label{eq:Psi_indicator_inq}
\Psi(g\circ \XX_T,(\der g\circ \XX_{\TT})\cdot \derm_{(t,z)}\XX_{\TT})\le \Psi(g\circ \XX_T,(\der g\circ \XX_{\TT}))  1_{\{z\le \ins_t\}}.
\end{equation}
It follows from \eqref{eq:Poisson_Phi_Sob} and \eqref{eq:Psi_indicator_inq} that
\begin{align*}
\Ent^{\Phi}_{\prob}(g)&\le \EE_{\pmeasure}\left[\int_{\state}\Psi(g\circ \XX_T,\der g\circ \XX_{\TT})1_{\{z\le \ins_t\}}\diff t\diff z\right]\\
&=\EE_{\pmeasure}\left[\Psi(g\circ \XX_T,\der g\circ \XX_{\TT})\int_{\state}1_{\{z\le \ins_t\}}\diff t\diff z\right]\\
&=\EE_{\pmeasure}\left[\Psi(g\circ \XX_T,\der g\circ \XX_{\TT})\int_0^{\TT} \ins_t \diff t\right].
\end{align*}
By \eqref{eq:lambda_opt}, $\int_0^{\TT} \ins_t \diff t=\int_0^{\TT} \frac{\Pheat_{\TT-t}\den(\XX_t+1)}{\Pheat_{\TT-t}f(\XX_t)}\diff t$. On the other hand, $\frac{\Pheat_{\TT-t}\den(\XX_t+1)}{\Pheat_{\TT-t}f(\XX_t)}\le \frac{\Pheat_{\TT-t}\den(1)}{\Pheat_{\TT-t}f(0)}$ by Proposition \ref{prop:poisson_presev} and \eqref{eq:lc_def_ulc_mono}. The proof is complete by Corollary \ref{cor:mean_mu}(3).
\end{proof}
Taking $\Phi(r)=r\log r$ we deduce a modified logarithmic Sobolev inequality, thus proving Theorem \ref{thm:modified_LSI_ULC_into}.
\begin{corollary}
\label{cor:modified_LSI_ULC}
Let $\prob$ be an ultra-log-concave probability measure over $\N$. Then, for any positive $g\in L^2(\N,\prob)$,
\[
\Ent_{\prob}(g)\le \LL\,\EE_{\prob}[\Psi(g,\der g)],
\]
where $\Psi(u,v):=(u+v)\log(u+v)-u\log u-(\log u+1)v$.
\end{corollary}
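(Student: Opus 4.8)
The plan is to derive Corollary \ref{cor:modified_LSI_ULC} directly from Theorem \ref{thm:PhiSoboleUlc} by choosing the convex function $\Phi$ appropriately, so the main work is purely algebraic: checking that the hypotheses of Theorem \ref{thm:PhiSoboleUlc} are met for this choice of $\Phi$, and checking that the resulting $\Psi$ matches the one in the statement.

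First I would take $\Phi(r) = r\log r$ on the interval $\mathcal I = [0,\infty)$ (with the usual convention $0\log 0 = 0$), which is smooth on $(0,\infty)$ and convex. With this $\Phi$, the $\Phi$-entropy functional $\Ent^{\Phi}_{\prob}$ is exactly the classical entropy $\Ent_{\prob}$, since $\int \Phi(g)\diff\prob - \Phi(\int g\diff\prob) = \int g\log g\diff\prob - \left(\int g\diff\prob\right)\log\left(\int g\diff\prob\right) = \Ent_{\prob}(g)$. Next I would compute the associated function
\[
\Psi(u,v) = \Phi(u+v) - \Phi(u) - \Phi'(u)v = (u+v)\log(u+v) - u\log u - (\log u + 1)v,
\]
using $\Phi'(u) = \log u + 1$; this is precisely the $\Psi$ appearing in the statement of the corollary and in Wu's and Johnson's theorems.

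The one genuine point to verify is that this $\Psi$ is nonnegative and convex on $\{(u,v) : u \ge 0,\ u+v\ge 0\}$, since that is the hypothesis needed to invoke Theorem \ref{thm:PhiSoboleUlc}. Nonnegativity is the statement that $\Phi$ lies above its tangent lines, i.e. convexity of $\Phi$: fixing $u$, the map $v \mapsto \Phi(u+v) - \Phi(u) - \Phi'(u)v$ has value $0$ and derivative $0$ at $v=0$, and second derivative $\Phi''(u+v) = 1/(u+v) \ge 0$, so it is nonnegative. For joint convexity, one can either cite that this is the Bregman divergence of $\Phi(r)=r\log r$ — which is well known to be jointly convex (it is, up to normalization, the relative entropy $u+v$ against $u$ scaled, essentially $\mathrm{KL}$-type) — or verify it by a direct Hessian computation: writing $w = u+v$, one finds $\partial^2_{uu}\Psi = 1/w - 1/u$ (wait, let me be careful) and more efficiently observes that $\Psi(u,v) = w\log w - u\log u - u(\log u)... $; cleanest is to note $\Psi(u,v) = w \log(w/u) - w + u$ after simplification, wait — actually $\Psi(u,v) = (u+v)\log\frac{u+v}{u} - v$, and the function $(a,b)\mapsto a\log(a/b)$ is jointly convex on $\R_{>0}^2$ (a standard fact, since it is the perspective of $a\mapsto a\log a$), while $-v = -( (u+v) - u)$ is linear, so $\Psi$ is jointly convex. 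This joint convexity of $(a,b) \mapsto a\log(a/b)$ is the one fact I would either cite or prove in a line via the perspective-function argument.

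With both properties of $\Psi$ established, Theorem \ref{thm:PhiSoboleUlc} applies verbatim to any positive $g \in L^2(\N,\prob)$ (positivity ensures $g,\der g$ take values consistent with $\mathcal I = [0,\infty)$ in the sense that $g \ge 0$ and $g + \der g = g(\cdot+1) \ge 0$), yielding $\Ent_{\prob}(g) = \Ent^{\Phi}_{\prob}(g) \le \LL\,\EE_{\prob}[\Psi(g,\der g)]$, which is exactly the claimed inequality. I do not expect any real obstacle here; the only care needed is the boundary/positivity bookkeeping for $\mathcal I$ and the citation for joint convexity of the Bregman divergence.
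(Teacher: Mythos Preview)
Your proposal is correct and follows exactly the approach the paper takes: the paper simply states ``Taking $\Phi(r)=r\log r$ we deduce a modified logarithmic Sobolev inequality'' and gives no further details, so you are merely filling in the verification of the hypotheses (nonnegativity and joint convexity of $\Psi$) that the paper leaves implicit. Your simplification $\Psi(u,v)=(u+v)\log\frac{u+v}{u}-v$ and the perspective-function argument for joint convexity are fine; just clean up the exploratory ``wait'' passages before writing it up.
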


\subsection{Transport-entropy inequalities}
\label{subsec:entropy-transport}
In this section we prove Theorem \ref{thm:entropy-transport_ULC_intro}. We fix an ultra-log-concave probability measure $\prob=\den\pos_{\TT}$ on $\N$, and recall the definition of the associated Poisson space from Section \ref{subsec:poisson_space}. The starting point is a transport-entropy inequality for the Poisson measure $\pmeasure$ by Ma, Shen, Wang, and Wu (a special case of their more general result), which requires the following definitions. Let $d$ be the total variation distance on $\pstate$ given by $d(\omega,\omega'):=|\omega-\omega'|(\state)$ \cite[Remark 2.4]{MR2797986}. Given two probability measures $Q,P$ on $(\pstate,\psig)$, with finite first moments, let the Wasserstein 1-distance between them be given by
\[
W_{1,d}(Q,P):=\inf_{\Pi}\int_{\pstate\times\pstate}d(\omega,\omega')\diff \Pi(\omega,\omega'),
\]
where the infimum is taken over all couplings $\Pi$ of $(Q,P)$. If $Q$ is absolutely continuous with respect to $P$, let the relative entropy between them be
\[
H(Q|P):=\int_{\pstate} \log\left(\frac{\diff Q}{\diff P}\right)\diff Q.
\]
Finally, given $c>0$, let 
\[
\alpha_c(r):=c\left[\left(1+\frac{r}{c}\right)\log\left(1+\frac{r}{c}\right)-\frac{r}{c}\right].
\]
\begin{theorem}{\cite[Eq. (2.4)]{MR2797986}.}
\label{thm:transport_Poisson}
For any probability measure $Q$ on $(\pstate,\psig)$ which is absolutely continuous with respect to $\pmeasure$, and has a finite first  moment, we have
\begin{equation}
\label{eq:transport_inq_Poisson}
\alpha_{\TT\ubd}\left(W_{1,d}(Q,\pmeasure)\right)\le H(Q|\pmeasure),
\end{equation}
where $\ubd=\frac{\den(1)}{\den(0)}$. 
\end{theorem}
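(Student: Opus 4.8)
The plan is to reduce the statement to a classical transport--entropy inequality for the one-dimensional Poisson law, and then to prove that inequality via Wu's modified logarithmic Sobolev inequality and a Herbst argument. The key observation is that under the total variation pseudometric only the total number of points matters: let $N:\pstate\to\N$ be $N(\omega):=\omega(\state)$, so that $N_*\pmeasure=\pos_{\TT\ubd}$ (since $\leb(\state)=\TT\ubd$), and $d(\omega,\omega')=|N(\omega)-N(\omega')|$ is the pullback of the metric $|\cdot|$ on $\N$. First I would check, by pushing couplings forward under $N\times N$ and lifting couplings on $\N\times\N$ via disintegration of $Q$ and $\pmeasure$ along $N$, that $W_{1,d}(Q,\pmeasure)=W_{1,|\cdot|}(N_*Q,\pos_{\TT\ubd})$, while the data-processing inequality for relative entropy gives $H(N_*Q\mid\pos_{\TT\ubd})\le H(Q\mid\pmeasure)$. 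Hence it suffices to prove, for $c:=\TT\ubd$ and every $\tilde Q\ll\pos_c$ with finite first moment, the inequality $\alpha_c\big(W_{1,|\cdot|}(\tilde Q,\pos_c)\big)\le H(\tilde Q\mid\pos_c)$.

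Next I would invoke Bobkov--G\"otze duality: since $\alpha_c$ is convex, increasing, and vanishes at $0$, the transport--entropy inequality just stated is equivalent to the exponential-moment bound
\[
\log\EE_{\pos_c}\big[e^{\theta f}\big]\ \le\ \theta\,\EE_{\pos_c}[f]+\alpha_c^*(\theta)\qquad\text{for all }\theta\ge 0\text{ and all bounded }1\text{-Lipschitz }f:\N\to\R,
\]
where a short Legendre computation using $\alpha_c'(r)=\log(1+r/c)$ gives $\alpha_c^*(\theta)=c(e^\theta-1-\theta)$ for $\theta\ge0$. Only $\theta\ge0$ is needed here, which matches the one-sided nature of $\alpha_c$; the general (unbounded) Lipschitz case then follows from the bounded one by truncation, and the extremal choice $f(k)=k$ shows that the constant in $\alpha_c$ cannot be improved.

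Finally I would prove this exponential bound by a Herbst argument driven by Wu's inequality (Theorem \ref{thm:Wu_intro}). Assuming $\EE_{\pos_c}[f]=0$, apply Wu's inequality to $g=e^{\theta f}$. Since $\der g(k)=g(k)\big(e^{\theta\der f(k)}-1\big)$, a direct simplification gives $\Psi(g,\der g)(k)=g(k)\big(e^{\theta\der f(k)}(\theta\der f(k)-1)+1\big)$, and because $|\der f|\le1$ and the function $b\mapsto e^{b}(b-1)+1$ is nonnegative with a minimum at $0$ and is maximized over $[-\theta,\theta]$ at the endpoint $b=\theta$ (for $\theta\ge0$), one gets $\EE_{\pos_c}[\Psi(g,\der g)]\le (e^\theta(\theta-1)+1)\,\EE_{\pos_c}[g]$. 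Writing $H(\theta):=\EE_{\pos_c}[e^{\theta f}]$ and $K:=\log H$, and using $\EE_{\pos_c}[g\log g]=\theta H'(\theta)$, Wu's inequality becomes the differential inequality $\theta K'(\theta)-K(\theta)\le c\,(e^\theta(\theta-1)+1)$; dividing by $\theta^2$, recognizing $\tfrac{e^\theta-1-\theta}{\theta}$ as an antiderivative of the right-hand side, and integrating from $\theta\to0^+$ (where $K(\theta)/\theta\to K'(0)=\EE_{\pos_c}[f]=0$) yields $K(\theta)\le c(e^\theta-1-\theta)=\alpha_c^*(\theta)$, which is the desired bound.

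The main obstacle is sharpness of the constant: one must check that no slack is lost when bounding $b\mapsto e^{b}(b-1)+1$ on $[-\theta,\theta]$ by its endpoint value at $\theta$, nor in the Herbst integration step, where correctly identifying the antiderivative is exactly what makes the output land on $\alpha_c^*$ rather than on a weaker rate function. A secondary technical point is handling the one-sidedness of $\alpha_c$ in the Bobkov--G\"otze reduction and the passage from bounded to unbounded Lipschitz test functions (with the extremizer $f(k)=k$ arising only in the limit).
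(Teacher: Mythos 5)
Your proposal is correct, but note that the paper does not prove Theorem~\ref{thm:transport_Poisson} at all: it is quoted directly from \cite{MR2797986}, so there is no internal argument to compare against, and what you have written is a genuinely self-contained alternative derivation. Its two pillars both check out. First, because the paper's $d$ is the degenerate pseudometric $d(\omega,\omega')=|\omega(\state)-\omega'(\state)|$, everything factors through the total count $N(\omega):=\omega(\state)$, with $N_*\pmeasure=\pos_{\TT\ubd}$; of the two directions of your identity $W_{1,d}(Q,\pmeasure)=W_{1,|\cdot|}(N_*Q,\pos_{\TT\ubd})$ only the lifting direction is actually needed (disintegrate $Q$ and $\pmeasure$ along $N$ and glue along a near-optimal coupling on $\N\times\N$), together with data processing for relative entropy and monotonicity of $\alpha_{\TT\ubd}$. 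Second, the one-dimensional step for $\pos_c$, $c=\TT\ubd$, is sound: the Legendre computation $\alpha_c^*(\theta)=c(e^\theta-1-\theta)$, the identity $\Psi\bigl(e^{\theta f},\der e^{\theta f}\bigr)=e^{\theta f}\bigl(e^{\theta\der f}(\theta\der f-1)+1\bigr)$, the endpoint bound on $[-\theta,\theta]$ (valid since $b\mapsto e^{b}(b-1)+1$ vanishes at $b=0$, is unimodal there, and $e^{\theta}(\theta-1)+e^{-\theta}(\theta+1)\ge 0$ for $\theta\ge 0$), and the antiderivative identity $\frac{d}{d\theta}\,\frac{e^\theta-1-\theta}{\theta}=\frac{e^\theta(\theta-1)+1}{\theta^2}$ are all correct, so the Herbst integration lands exactly on $\alpha_c^*$; the one-sided Bobkov--G\"otze/Gozlan--L\'eonard duality with bounded $1$-Lipschitz test functions (sufficient by truncation, since both measures have finite first moment) then yields the transport--entropy inequality. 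What your route buys is a proof using only ingredients already quoted in the paper (Wu's inequality, Theorem~\ref{thm:Wu_intro}) plus standard duality; what it gives up is generality, since it exploits that $d$ sees only total counts, whereas the cited reference establishes a stronger statement for genuinely configuration-dependent costs, which is presumably why the authors simply cite it. Your sharpness remark about $f(k)=k$ is tangential and not needed for the statement.
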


Let us now transport the inequality \eqref{eq:transport_inq_Poisson}, thus proving Theorem \ref{thm:entropy-transport_ULC_intro}. To do so, we define the Wasserstein 1-distance between two probability measures $\nu,\rho$ on $\N$, with finite first moments, by
\[
W_{1,|\cdot|}(\nu,\rho):=\inf_{\Pi}\int_{\N\times\N}|x-y|\diff\Pi(x,y),
\]
where the infimum is taken over all couplings $\Pi$ of $(\nu,\rho)$.
\begin{theorem}
\label{thm:entropy-transport_ULC}
Let $\prob=\den\pos_{\TT}$ be an ultra-log-concave probability measure on $\N$ with $\ubd=\frac{\den(1)}{\den(0)}$. Then,  for any probability measure $\nu$ on $\N$ which is absolutely continuous with respect to $\prob$, and has a finite first  moment, we have 
\begin{equation}
\label{eq:transport_inq_ULC}
\alpha_{\TT\ubd}\left(W_{1,|\cdot|}(\nu,\prob)\right)\le H(\nu|\prob).
\end{equation}
\end{theorem}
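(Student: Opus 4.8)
The plan is to push the transport-entropy inequality for $\pmeasure$ (Theorem \ref{thm:transport_Poisson}) through the Poisson transport map $\XX_{\TT}$, using the fact from Corollary \ref{cor:contraction} that $\XX_{\TT}$ is $1$-Lipschitz in the appropriate discrete sense. First I would observe that $\XX_{\TT}:(\pstate,\psig,\pmeasure)\to(\N,\prob)$ is measure-preserving by Lemma \ref{lem:X_T}, so given any $\nu\ll\prob$ on $\N$ we can define a probability measure $Q$ on $\pstate$ by $\diff Q:=\left(\frac{\diff\nu}{\diff\prob}\circ\XX_{\TT}\right)\diff\pmeasure$. Then $(\XX_{\TT})_*Q=\nu$, and by the chain rule for relative entropy (the map $\XX_{\TT}$ being a deterministic factor), $H(Q|\pmeasure)=H(\nu|\prob)$; more precisely, since $\frac{\diff Q}{\diff\pmeasure}$ is $\sigma(\XX_{\TT})$-measurable, the data-processing identity gives equality rather than merely an inequality here. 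One also checks $Q$ has finite first moment on $\pstate$ whenever $\nu$ does on $\N$, using that $\omega(\state)$ is $\pmeasure$-integrable and $\XX_{\TT}(\omega)\le\omega(\state)$.

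Next I would compare the two Wasserstein distances. Given any coupling $\Pi$ of $(Q,\pmeasure)$, push it forward by $(\XX_{\TT},\XX_{\TT}):\pstate\times\pstate\to\N\times\N$ to obtain a coupling $\widetilde\Pi$ of $(\nu,\prob)$. The key point is the pointwise bound $|\XX_{\TT}(\omega)-\XX_{\TT}(\omega')|\le d(\omega,\omega')=|\omega(\state)-\omega'(\state)|$, which should follow from Corollary \ref{cor:contraction}: adding one atom to $\omega$ changes $\XX_{\TT}$ by $0$ or $1$, and $d$ counts the total number of atoms that must be added and removed to pass from $\omega$ to $\omega'$ (one can interpolate through a configuration $\omega\wedge\omega'$ by removing the atoms of $\omega$ not in $\omega'$ one at a time, then adding those of $\omega'$ not in $\omega$, each step moving $\XX_{\TT}$ by at most $1$). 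Hence $\int|x-y|\diff\widetilde\Pi\le\int d(\omega,\omega')\diff\Pi$, and taking the infimum over $\Pi$ yields $W_{1,|\cdot|}(\nu,\prob)\le W_{1,d}(Q,\pmeasure)$.

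With these two facts in hand the proof closes immediately: since $\alpha_{\TT\ubd}$ is non-decreasing on $[0,\infty)$,
\[
\alpha_{\TT\ubd}\left(W_{1,|\cdot|}(\nu,\prob)\right)\le\alpha_{\TT\ubd}\left(W_{1,d}(Q,\pmeasure)\right)\le H(Q|\pmeasure)=H(\nu|\prob),
\]
where the middle inequality is Theorem \ref{thm:transport_Poisson} applied to $Q$, and the final equality is the entropy identity above. This is exactly \eqref{eq:transport_inq_ULC}.

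The main obstacle I anticipate is the pointwise Lipschitz bound $|\XX_{\TT}(\omega)-\XX_{\TT}(\omega')|\le|\omega(\state)-\omega'(\state)|$: Corollary \ref{cor:contraction} only directly controls the effect of adding a \emph{single} atom at a point where $\omega$ has no atom, whereas $d(\omega,\omega')$ and the interpolation argument require iterating this — and in particular require knowing that the a.s.\ statement $\derm_{(t,z)}\XX_{\TT}\in\{0,1\}$ can be upgraded to hold simultaneously along a whole finite chain of configurations, and that removing an atom (rather than adding one) is likewise controlled (which follows by symmetry, reading the increment in the other direction). Handling the null sets carefully — the exceptional set in Corollary \ref{cor:contraction} depends on $(t,z)$ — and ensuring the interpolating configurations avoid it $\pmeasure$-a.s.\ is the delicate bookkeeping; everything else (the change of measure, the entropy identity, monotonicity of $\alpha_c$) is routine. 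An alternative that sidesteps the chain is to note $\XX_{\TT}(\omega)\le\omega(\state)$ with $\XX_{\TT}$ non-decreasing under addition of atoms, which already gives $|\XX_{\TT}(\omega)-\XX_{\TT}(\omega')|\le d(\omega,\omega')$ directly when one of the configurations dominates the other, and then reduce the general case to this via $\omega\wedge\omega'$.
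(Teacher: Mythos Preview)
Your proposal is correct and follows the same route as the paper's proof: push Theorem~\ref{thm:transport_Poisson} through $\XX_{\TT}$ using the Lipschitz property coming from Corollary~\ref{cor:contraction}, together with an entropy identity for the lift to $\pstate$. The paper differs only in packaging: it cites the representation $H(\nu|\prob)=\inf\{H(Q|\pmeasure):(\XX_{\TT})_*Q=\nu\}$ from \cite[Eq.~(2.1)]{MR2078555} rather than constructing your specific $Q$ (which is precisely the minimizer in that infimum), and it invokes \cite[Lemma~2.3]{MR2797986} for the pointwise Lipschitz bound rather than arguing it by the add/remove-atom interpolation you outline.
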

\begin{proof}
We follow the proof of \cite[Lemma 2.1]{MR2078555}. Fix a probability measure $\nu$ on $\N$ which is absolutely continuous with respect to $\prob$, and has a finite first  moment. By \cite[Eq. (2.1)]{MR2078555},
\begin{equation}
\label{eq:entropy_rep}
H(\nu|\prob)=\inf_Q\{H(Q|\pmeasure): Q\circ \XX_{\TT}^{-1}=\nu\}.
\end{equation}
Hence, by \eqref{eq:transport_inq_Poisson}, it suffices to show that
\begin{equation}
\label{eq:infW_W}
\alpha_{\TT\ubd}\left(W_{1,|\cdot|}(\nu,\prob)\right)\le \inf_Q\left\{\alpha_{\TT\ubd}\left(W_{1,d}(Q,\pmeasure)\right):Q\circ \XX_{\TT}^{-1}=\nu\right\}.
\end{equation}
Since $\alpha_{\TT\ubd}$ is monotonic, \eqref{eq:infW_W} is equivalent to 
\begin{equation}
\label{eq:infW_W1}
W_{1,|\cdot|}(\nu,\prob)\le \inf_Q\left\{W_{1,d}(Q,\pmeasure):Q\circ \XX_{\TT}^{-1}=\nu\right\}.
\end{equation}
To establish \eqref{eq:infW_W1}, note that by Corollary \ref{cor:contraction}, and \cite[Lemma 2.3]{MR2797986}, we have that $\XX_{\TT}:(\pstate,d)\to (\N,|\cdot|)$ is 1-Lipschitz. Fix $Q$ such that  $Q\circ \XX_{\TT}^{-1}=\nu$, and let $\Pi$ be the coupling attaining the minimum in the definition of $W_{1,d}(Q,\pmeasure)$. Note that $\Pi\circ \XX_{\TT}^{-1}$ is a coupling of $(Q\circ \XX_{\TT}^{-1},\pmeasure\circ \XX_{\TT}^{-1})=(\nu,\prob)$. Hence,
\begin{align*}
W_{1,|\cdot|}(\nu,\prob) \le \int_{\pstate\times\pstate} |\XX_{\TT}(\omega)-\XX_{\TT}(\omega')|\,\diff\Pi(\omega,\omega')\le \int_{\pstate\times\pstate} d(\omega,\omega')\,\diff\Pi(\omega,\omega')=W_{1,d}(Q,\pmeasure),
\end{align*}
which establishes \eqref{eq:infW_W1} by taking the infimum over $Q$.
\end{proof}

\begin{remark}
\label{rem:better_c}
It is possible in principle to improve the constant $\TT\ubd$ to $\LL$ as follows. Instead of working with $\state=[0,\TT]\times [0,\ubd]$, we can work with 
\[
\tilde{\state}:=\left\{(t,z)\in [0,\TT]\times\R_{\ge 0}: z\le \frac{\Pheat_{\TT-t}\den(1)}{\Pheat_{\TT-t}\den(0)}\right\},
\]
since $\ins_t\le \frac{\Pheat_{\TT-t}\den(1)}{\Pheat_{\TT-t}\den(0)}$ $\pmeasure$-a.s.  (cf. \eqref{eq:lambda_M}). Then the volume of  $\tilde{\state}$ is $\int_0^{\TT}\frac{\Pheat_{\TT-t}\den(1)}{\Pheat_{\TT-t}\den(0)}=\LL$, where the equality holds by Corollary \ref{cor:mean_mu}(3). This approach, however, requires a modification of the formulation we used in this paper, with minor benefits, so we do not pursue this improvement. 
\end{remark}

\bibliographystyle{amsplain0}
\bibliography{ref_Poisson_transport}

\end{document}